\DeclareMathAlphabet{\mathcal}{OMS}{cmsy}{m}{n}
\DeclareMathAlphabet{\mathbbold}{U}{bbold}{m}{n}  
\theoremstyle{plain}
\newtheorem{thm}{Theorem}
\newtheorem{lm}[thm]{Lemma}
\newtheorem{prop}[thm]{Proposition}
\theoremstyle{remark}
\newtheorem{rmk}{Remark}
\theoremstyle{definition}
\newcommand{\bnu}{\begin{enumerate}}
\newcommand{\enu}{\end{enumerate}}
\newcommand{\bpf}{\begin{proof}}
\newcommand{\epf}{\end{proof}}
\newcommand{\q}{\quad}
\newcommand{\qq}{\qquad}
\newcommand{\al}{\alpha}
\newcommand{\be}{\beta}
\newcommand{\ga}{\gamma}
\newcommand{\om}{\omega}
\newcommand{\ep}{\epsilon}
\newcommand{\si}{\sigma}
\newcommand{\tht}{\theta}
\newcommand{\vp}{\varphi}
\newcommand{\de}{\delta}
\newcommand{\bbr}{\mathbb{R}}
\newcommand{\rn}{\mathbb{R}^n}
\newcommand{\lp}{L^p}
\newcommand{\f}{\frac}
\newcommand{\p}{\partial}
\newcommand{\nf}{\infty}
\newcommand{\tf}{\tfrac}
\newcommand{\wh}{\widehat}
\newcounter{question}
\newcommand{\mm}{\mathcal M}
\newcommand{\wdt}{\widetilde}
\begin{document}

\author[J. A.  Barrionevo]{J. A.  Barrionevo}

\address{Department of Pure and Applied Mathematics, Universidade Federal do Rio Grande do Sul 
Porto Alegre, RS, Brazil 91509-900}
\email{josea@mat.ufrgs.br}

\author[L. Grafakos]{Loukas Grafakos}
\address{Department of Mathematics, University of Missouri, Columbia MO 65211, USA}
\email{grafakosl@missouri.edu}

\author[D. He]{Danqing He}
\address{Department of Mathematics, Sun Yat-sen  University, Guangzhou, 510275, P. R. China}
\email{hedanqing@mail.sysu.edu.cn}

\author[P. Honz\'ik]{Petr Honz\'ik}
\address{MFF UK,
Sokolovska 83,
Praha 7,
Czech Republic}
\email{honzik@gmail.com}

\author[L. Oliveira]{Lucas Oliveira}

\address{Department of Pure and Applied Mathematics, Universidade Federal do Rio Grande do Sul 
Porto Alegre, RS, Brazil 91509-900}
\email{lucas.oliveira@ufrgs.br}

\title{Bilinear Spherical Maximal Function}
\date{}

\thanks{The fourth author was supported by the ERC CZ grant LL1203 of the Czech Ministry of Education. 
The second author acknowledges the support of Simons Foundation and of the University of Missouri Research Board and Research Council.}

\maketitle

\begin{abstract}
We obtain boundedness for the bilinear spherical maximal function in a range of exponents that includes 
the Banach triangle and a range of  $L^p$ with $p<1$.  We also obtain counterexamples that are asymptotically optimal 
with our positive results on  certain indices as the dimension tends to infinity.
\end{abstract}

\section{Introduction}
Let $\si$ be surface measure on the unit sphere.
The spherical maximal function
\begin{equation}\label{BSMFL}
\mathscr M(f)(x)=\sup_{t>0}\Big|\int_{|y|=1}f(x-ty)d\si(y)\Big|,
\end{equation}
was first studied by Stein \cite{Stein1976}  who
 provided a counterexample showing that it is unbounded on $\lp(\mathbb R^n)$ for 
$p\le \tf n{n-1}$  and   obtained the a priori  inequality
$\|\mathscr M(f)\|_{L^p(\mathbb R^n)}\le C_{p,n}\|f\|_{L^p(\mathbb R^n)}$ when $n\ge 3$, $p\in(\tf n{n-1},\nf)$ for 
smooth functions $f$; see also the account in \cite[Chapter XI]{stein-book2}. 
The extension of this result to the case $n=2$ 
was established about a decade  later by Bourgain    \cite{Bourgain1986}. 

In addition to Stein  and Bourgain, other authors have studied the spherical maximal function;  for instance see \cite{Cowling1979}, \cite{Carbery1985},
\cite{RdF86}, \cite{Mockenhaupt1992}, and \cite{Schlag1998}. 
Among   the techniques used in these works, we  highlight that of   Rubio de Francia 
\cite{RdF86}, in which  the $\lp$ boundedness of \eqref{BSMFL} is reduced to certain 
$L^2$ estimates obtained by Plancherel's theorem. Extensions of the spherical maximal function to different  settings have also been established by several authors:   for instance  see \cite{Coifman1978a}, \cite{Calderon1979} \cite{Greenleaf1981},
\cite{Duoandikoetxea1996} and \cite{Magyar2002}.

In this work we 
 study   the bi(sub)linear spherical maximal function  defined in \eqref{BSMF},
 which was   introduced and first  studied by \cite{GGIPS}.   In the bilinear setting the role of the crucial $L^2\to L^2 $ estimate is played  by an $L^2\times L^2\to L^1$,  
and obviously Plancherel's identity cannot be used on $L^1$. 
 We overcome the lack of orthogonality on $L^1$ via a wavelet technique introduced by three of the authors in
\cite{GraHeHon} 
in the study of certain bilinear operators; on this approach see   \cite{GHH1},    \cite{He}.
It is worth mentioning a related interesting recent  paper \cite{GIKL},
where the authors studied the bilinear circular average when $n=1$.
Our object of study here is the bi(sub)linear spherical maximal function 
\begin{equation}\label{BSMF}
\mm (f,g)(x)=\sup_{t>0}  \Big|\int_{\mathbb S^{2n-1}}f(x-ty)g(x-tz) d\si(y,z) \Big|
\end{equation}
initially defined for Schwartz functions $f,g$ on $\rn$. Here $\sigma$ is surface measure on the $2n-1$-dimensional sphere. We are concerned with bounds for $\mm$ from a product of 
Lebesgue spaces $L^{p_1}(\rn)\times L^{p_2}(\rn)$ to another Lebesgue space $L^p(\rn)$, where  
$1/p=1/p_1+1/p_2$. The main result of this article is the following:

\begin{thm}\label{Main}
Let $n\ge 8$ and let $\delta_n =(2n-15)/10$. Then the bilinear maximal operator $\mm$, 
 when restricted to Schwartz functions, is  bounded    from $L^{p_1}(\rn)
\times L^{p_2}(\rn)$ to $L^p(\rn)$ with $\f 1 p=\f 1{p_1}+\f 1 {p_2}$ for all indices $(\tf1{p_1},\tf1{p_2},\tf1p)$ in the  open  rhombus with vertices the points
$\vec P_0=(\f  1 \nf,\f  1 \nf,\f  1 \nf)$, $\vec P_1=(1,\f  1 \nf,  1)$, $\vec P_2=(\f 1 \nf, 1,1)$ and 
$\vec P_3=(\f{1+2\de_n}{2+2\de_n}, \f{1+2\de_n}{2+2\de_n},\f{1+2\de_n}{1+ \de_n})$.  
\end{thm}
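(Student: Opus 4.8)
\emph{Strategy.} The plan is to decompose the bilinear spherical average in frequency, dispose of its low-frequency part by a pointwise maximal bound, and treat each lacunary piece by combining Stein's square-function trick (to remove the supremum over $t$) with the wavelet method of \cite{GraHeHon} (to compensate for the failure of Plancherel's identity on $L^1$); interpolating the pieces against estimates at the four vertices and summing then produces the open rhombus. Write $A_t(f,g)(x)=\int_{\mathbb S^{2n-1}}f(x-ty)g(x-tz)\,d\sigma(y,z)$, so that $\mm(f,g)=\sup_{t>0}|A_t(f,g)|$ and $A_t$ is the bilinear multiplier operator with symbol $\widehat\sigma(t\xi,t\eta)$. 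By the Bessel asymptotics, $|\partial^\alpha\widehat\sigma(\zeta)|\lesssim(1+|\zeta|)^{-(2n-1)/2}$ for every $\alpha$; fixing a Littlewood--Paley partition $1=\sum_{j\ge0}\Phi_j$ subordinate to the annuli $|\zeta|\sim2^j$ in $\mathbb R^{2n}$ and setting $m_j=\widehat\sigma\,\Phi_j$, we get symbols supported in $|\zeta|\sim2^j$ with $|\partial^\alpha m_j(\zeta)|\lesssim2^{-j(2n-1)/2}$. Writing $T^j_t$ for the bilinear multiplier operator with symbol $m_j(t\xi,t\eta)$ and $\mm^j(f,g)=\sup_{t>0}|T^j_t(f,g)|$, we have $\mm(f,g)\le\sum_{j\ge0}\mm^j(f,g)$; the piece $j=0$ has a fixed Schwartz kernel on $\mathbb R^{2n}$, whence $\mm^0(f,g)\lesssim M(f)M(g)$ pointwise, and this is bounded $L^{p_1}\times L^{p_2}\to L^p$ on the whole open rhombus (every point of which has $p_1,p_2>1$). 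So the entire difficulty is to bound $\mm^j$ for $j\ge1$ with a gain $2^{-\varepsilon j}$ on the claimed region.

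Fix $j\ge1$. Since $t\mapsto T^j_t(f,g)(x)$ tends to $0$ as $t\to0^+$ and as $t\to\infty$ (the symbol lives where $t|(\xi,\eta)|\sim2^j$), we have
\[
\mm^j(f,g)(x)^2\le2\int_0^\infty\bigl|T^j_t(f,g)(x)\bigr|\,\bigl|t\,\partial_tT^j_t(f,g)(x)\bigr|\,\frac{dt}{t},
\]
so by the Cauchy--Schwarz inequality $\mm^j(f,g)\le\sqrt2\,G_j(f,g)^{1/2}\widetilde G_j(f,g)^{1/2}$, where $G_j(f,g)=\bigl(\int_0^\infty|T^j_t(f,g)|^2\,\tfrac{dt}{t}\bigr)^{1/2}$ and $\widetilde G_j$ is the same square function built from $t\,\partial_tT^j_t$. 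Since $t\,\partial_t[m_j(t\zeta)]=(t\zeta)\cdot(\nabla m_j)(t\zeta)$ is again localized to $|t\zeta|\sim2^j$, of the same type as $m_j$ but with amplitude $2^{-j(2n-3)/2}$ (from the extra factor $|t\zeta|\sim2^j$), and since for each fixed $t$ the symbol occupies a single annulus (so the integral $\int\tfrac{dt}{t}$ is harmless), it remains to bound the bilinear square functions $G_j,\widetilde G_j$ from $L^{p_1}\times L^{p_2}$ to $L^p$. Near $\vec P_0$ this is immediate because $\mm^j$ has a uniformly $L^1$-bounded kernel on $\mathbb R^{2n}$ (the measure $\sigma$ smoothed at scale $2^{-j}$); near $\vec P_1$ and $\vec P_2$, freezing one argument and integrating the kernel in the other pair of variables yields a fixed integrable profile, so that $\mm^j(f,g)\lesssim\|g\|_\infty M(f)$ (resp.\ $\|f\|_\infty M(g)$) uniformly in $j$. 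Thus all three ``easy'' vertices cost nothing in $j$, and the only delicate point is to extract a genuine gain $2^{-a_nj}$ near $\vec P_3$.

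That decisive estimate --- a bound for $G_j$ at (or arbitrarily close to) $\vec P_3$, where the target $L^p$ has $p<1$ and no orthogonality is available downstairs --- is where the wavelet technique of \cite{GraHeHon} enters. Expand $f=\sum_I\langle f,\psi_I\rangle\psi_I$ and $g=\sum_J\langle g,\psi_J\rangle\psi_J$ in a band-limited (Meyer-type) wavelet basis, so $T^j_t(f,g)=\sum_{I,J}\langle f,\psi_I\rangle\langle g,\psi_J\rangle\,T^j_t(\psi_I,\psi_J)$; because $m_j$ is supported where $|(t\xi,t\eta)|\sim2^j$, only pairs whose frequency supports fit in a ball of radius $\sim2^j/t$ contribute, and in the non-degenerate range this forces both $I$ and $J$ to have side length $\sim t2^{-j}$, finitely many degenerate configurations being treated separately. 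One controls each $T^j_t(\psi_I,\psi_J)$, after accounting for the oscillation carried by $m_j$, by rapidly decaying bumps adapted to the common location of $I$ and $J$ at scale $t2^{-j}$ and of amplitude $2^{-j(2n-1)/2}$; reassembles the output through the square-function characterization of $L^p$, valid for all $0<p<\infty$; and estimates the resulting double sum of products of wavelet coefficients by Cauchy--Schwarz and H\"older on the coefficient sequences, recovering $\|f\|_{L^{p_1}}\|g\|_{L^{p_2}}$ up to a combinatorial factor --- a power of $2^j$ whose exponent grows linearly in $n$ and accounts for the admissible scales and positions of the wavelet pairs and for the passage to $L^p$. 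I expect this trade-off to be the crux of the proof: the spherical gain $2^{-j(2n-1)/2}$, degraded by the $t$-derivative in $\widetilde G_j$ to an effective $2^{-j(n-1)}$ after the Cauchy--Schwarz split (the geometric mean of the two amplitudes), must beat this combinatorial loss with enough room left to interpolate down to $\vec P_3$, and it is exactly the size of that surplus that forces $n\ge8$ and pins $\delta_n=(2n-15)/10$.

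Finally, after linearizing the supremum, one interpolates the gain at $\vec P_3$ against the uniform-in-$j$ bounds near $\vec P_0,\vec P_1,\vec P_2$ --- by multilinear complex interpolation in a form valid for targets $L^p$ with $p<1$ --- to obtain $\|\mm^j(f,g)\|_{L^p}\lesssim2^{-\varepsilon j}\|f\|_{L^{p_1}}\|g\|_{L^{p_2}}$ with $\varepsilon=\varepsilon(p_1,p_2,p)>0$ for every triple $(\tfrac1{p_1},\tfrac1{p_2},\tfrac1p)$ in the open rhombus; summing over $j\ge1$ and adding the $j=0$ term completes the proof of Theorem~\ref{Main}.
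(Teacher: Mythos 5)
Your overall architecture matches the paper's: a Littlewood--Paley decomposition of $\wh{d\si}$ into pieces $m_j$ supported where $|(\xi,\eta)|\sim 2^j$, removal of the supremum by Stein's $s\,\p_s$ device, the wavelet bound of \cite{GraHeHon} as a substitute for Plancherel on $L^1$, and interpolation plus summation in $j$. But the decisive estimate is left as an expectation rather than proved. Everything hinges on a quantitative inequality of the form $\|T_m\|_{L^2\times L^2\to L^1}\lesssim C_M^{1/5}\|m\|_{L^2}^{4/5}$ (Corollary 8 of \cite{GraHeHon}); it is the factor $\|\wdt m_j^1\|_{L^2}^{4/5}\lesssim 2^{\f45\cdot\f32 j}$ played against the pointwise decay $2^{-j(2n-3)/2}$ of $(\xi,\eta)\cdot\nabla m_j^1$ that produces $\de_n=\f n5-\f32=(2n-15)/10$ and forces $n\ge 8$. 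You describe the wavelet expansion qualitatively and then write that you ``expect this trade-off to be the crux'' --- that is exactly the step that cannot be waved at: without the exponents $1/5$ and $4/5$ there is no way to check that the spherical decay beats the combinatorial loss, nor to recover the stated $\de_n$.

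Two further structural problems. First, your interpolation scheme is inverted: you propose to prove the wavelet estimate ``at (or arbitrarily close to) $\vec P_3$, where the target $L^p$ has $p<1$,'' and then interpolate that gain against uniform-in-$j$ bounds at the other vertices. The wavelet method delivers only the $L^2\times L^2\to L^1$ endpoint (it rests on Cauchy--Schwarz over wavelet coefficients), and the paper reaches $p<1$ the other way around: it interpolates the decaying $(2,2,1)$ bound against the \emph{growing} bound $\|\mm_j\|_{L^{1}\times L^{1+\ep}\to L^{p,\nf}}\lesssim 2^j$ coming from the kernel estimate $\mm_j(f,g)\le C2^jM(f)M(g)$; the rhombus, and in particular the location of $\vec P_3$, is precisely the region where the decay survives this loss, so the endpoint bounds near $\vec P_1,\vec P_2$ are not uniform in $j$ as you assert. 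Second, the $t$-integration in your square functions diverges without a diagonal/off-diagonal splitting of $m_j$: when $|\xi|\gg 2^{j}|\eta|$, the set of $t$ for which a fixed frequency $\eta$ of $g$ contributes has infinite measure with respect to $dt/t$, which is why the paper writes $m_j=m_j^1+m_j^2$, applies the wavelet lemma only to the diagonal part (where the $t$-range costs a harmless factor $j$), and handles the off-diagonal part by the pointwise domination $|T_{\si_t}(f,g)|\lesssim 2^{-j(\de-\ep)}M(g)\,T_m(f_t)(x/t)$. Your parenthetical about ``finitely many degenerate configurations'' does not address this divergence.
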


Once Theorem~\ref{Main} is known, it follows that 
 $\mm$ admits a bounded extension from $L^{p_1}(\rn)
\times L^{p_2}(\rn)$ to $L^p(\rn)$ for   indices in the open rhombus of Theorem~\ref{Main} (for such indices we have 
$p_1,p_2<\nf$).  Indeed, 
given   $\{f_j\}_j$  Schwartz functions converging   to $f$ in $ L^{p_1}$ and   $\{g_k\}_k$ Schwartz functions converging      to $g$  in $L^{p_2}$,   we have that 
$$
\| \mm(f_j,g_j) - \mm(f_{j'}, g_{j'}) \|_{L^p} \le \big\|   \mm(f_j-f_{j'} ,g_j)  + \mm(f_j,g_j-g_{j'})   \big\|_{L^p} .
$$
It follows  from this  that the sequence $\{\mm(f_j,g_j)\}_j$ is Cauchy in $L^p(\rn)$ and hence it converges to a 
value which we also call 
$  \mm(f,g)$. This is the bounded extension of $\mm$ from $L^{p_1}(\rn)
\times L^{p_2}(\rn)$ to $L^p(\rn)$.  In order to pass to the maximal function defined on $L^{p_1}\times L^{p_2},$ it is also possible to used the technique desribed in~\cite[page 508]{stein-book2}.

Concerning  dimensions smaller than $8$, we have  positive answers in the Banach range
in next section. 

\section{The Banach range in dimensions $n\ge 2$}
 
\begin{prop}\label{infty} { Let $n\ge 2$. Then}
 $\mm$ maps $L^{p_1}(\rn) \times L^{p_2}(\rn) $ to $L^p (\rn)$  when $\f 1{p_1} +\f{1}{p_2} = \f 1p$, $1<p_1,p_2\le \nf$, and 
 $1<p\le \nf$. 
\end{prop}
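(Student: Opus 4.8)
The plan is to reduce the bilinear maximal function to the (linear) Stein spherical maximal function by a Hölder-type argument, exploiting the fact that the measure $\sigma$ on $\mathbb S^{2n-1}$ can be disintegrated so that one variable is controlled by a spherical average and the other is trivially bounded. First I would record the endpoint $L^\infty \times L^\infty \to L^\infty$ bound, which is immediate: since $\sigma$ is a probability measure (after normalization), $|\mathscr M(f,g)(x)| \le \|f\|_{L^\infty}\|g\|_{L^\infty}$. Next I would establish the two ``off-diagonal'' estimates $L^{p_1}\times L^\infty \to L^{p_1}$ for $1<p_1\le\infty$ and symmetrically $L^\infty \times L^{p_2}\to L^{p_2}$. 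For the first, write
\begin{equation*}
\Big|\int_{\mathbb S^{2n-1}} f(x-ty)g(x-tz)\, d\sigma(y,z)\Big|
\le \|g\|_{L^\infty}\int_{\mathbb S^{2n-1}} |f(x-ty)|\, d\sigma(y,z),
\end{equation*}
and then observe that the pushforward of $\sigma$ under the projection $(y,z)\mapsto y$ onto the unit ball of $\rn$ is an absolutely continuous measure whose density is a bounded (indeed smooth away from the boundary, integrable) radial function; averaging $|f|$ against it is dominated pointwise by a fixed multiple of the Hardy--Littlewood maximal function of $|f|$, or alternatively by Stein's spherical maximal function $\mathscr M(|f|)$. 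Taking the supremum over $t>0$ and using the $L^{p_1}$ boundedness of that operator for $p_1>n/(n-1)$, and the trivial $L^\infty$ bound, gives the claim for $p_1 > n/(n-1)$; for the remaining range $1<p_1\le n/(n-1)$ one uses instead the pointwise domination by the Hardy--Littlewood maximal operator, which is bounded on all $L^{p_1}$ with $p_1>1$. (Here one should be slightly careful that the projected density is genuinely bounded near the boundary sphere $|y|=1$ when $n\ge 2$; this is where the hypothesis $n\ge 2$ enters, as for $n=1$ the disintegration produces a non-integrable singularity.)

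Having the three vertices $L^\infty\times L^\infty\to L^\infty$, $L^{p_1}\times L^\infty\to L^{p_1}$, and $L^\infty\times L^{p_2}\to L^{p_2}$, I would then interpolate. The subtlety is that $\mathscr M$ is only \emph{sublinear} in each argument, not linear, so one cannot directly invoke bilinear complex interpolation. Instead I would linearize: freeze a measurable function $x\mapsto t(x)$ and consider the genuinely bilinear operator $(f,g)\mapsto \int f(x-t(x)y)g(x-t(x)z)\,d\sigma(y,z)$, prove the three estimates above uniformly in the choice of $t(\cdot)$, apply bilinear Riesz--Thorin (or the multilinear Marcinkiewicz-type interpolation of Grafakos--Tao) to get the full open triangle $\{1<p_1,p_2\le\infty,\ 1/p=1/p_1+1/p_2\}$ with constants independent of $t(\cdot)$, and finally take the supremum over $t(\cdot)$ to recover $\mathscr M(f,g)$. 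A standard measurable-selection argument shows the supremum over all measurable $t(\cdot)$ agrees with the supremum over $t>0$ in \eqref{BSMF} for, say, continuous $f,g$, and then one extends by density.

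The main obstacle is not any single hard estimate but organizing the interpolation correctly around the sublinearity: one must be disciplined about linearizing first and keeping all bounds uniform in the linearization parameter, and one must handle the two regimes $p_i>n/(n-1)$ versus $1<p_i\le n/(n-1)$ separately (Stein's maximal theorem in the former, Hardy--Littlewood in the latter) before patching. A secondary point to get right is the exact nature of the density obtained by disintegrating surface measure on $\mathbb S^{2n-1}$ over one of the two $\rn$-factors — verifying that it is bounded (equivalently, that the ``slices'' $\mathbb S^{2n-1}\cap\{y=\text{const}\}$ have comparable mass) so that the pointwise domination by $\mathscr M$ and by the Hardy--Littlewood maximal function is legitimate. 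Once these are in place, the conclusion for the stated range of $(p_1,p_2,p)$ follows.
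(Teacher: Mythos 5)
Your proof is correct, but the key edge estimate is obtained by a genuinely different (and more elementary) route than the paper's. Both arguments first reduce to the linear operator $\mm^0(f)(x)=\sup_{t>0}\int_{\mathbb S^{2n-1}}|f(x-ty)|\,d\si(y,z)$ when one function is in $L^\nf$, and both then interpolate between the two edges of the triangle. The paper treats $\mm^0$ as a maximal Fourier multiplier operator with symbol $\wh{d\si}(t\xi,0)$, which decays like $|\xi|^{-(n-\frac12)}$, and invokes Rubio de Francia's Theorem B; the hypothesis $n\ge 2$ enters through the requirement $n-\tfrac12\ge \tfrac{n+1}{2}$. You instead disintegrate $\si$ over the $y$-variable: by the paper's own Lemma~\ref{04021} the marginal density is $c_n(1-|y|^2)^{(n-2)/2}$ on the unit ball, which is bounded precisely when $n\ge2$, whence $\mm^0(f)\le C\,M(f)$ pointwise with $M$ the Hardy--Littlewood maximal operator. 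This is stronger and more transparent than the multiplier argument (it needs no Fourier analysis and makes visible why $n=1$ fails at this step, the density there being $(1-y^2)^{-1/2}$), though it yields nothing beyond the Banach range; note also that your appeal to Stein's spherical maximal theorem for $p_1>n/(n-1)$ is superfluous, since the Hardy--Littlewood domination already covers all $p_1>1$. Your care with the sublinearity in the interpolation step (linearizing via a measurable stopping time $t(\cdot)$ with bounds uniform in the linearization) is legitimate and in fact more detailed than the paper, which simply asserts that the result ``follows by interpolation.''
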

 
\begin{proof}
We show that    $\mm$ is bounded on the intervals
$[\vec P_0,\vec P_1)$ and $[\vec P_0,\vec P_2)$, where $\vec P_1$ and $\vec P_2$ are as in Theorem~\ref{Main}.  
Then the claimed assertion follows by interpolation. 
If one function, for instance the second one $g$, lies in $L^\nf$, matters reduce to the $L^p(\rn)$ boundedness of the 
maximal operator
$$
\mm^0 (f )(x)=\sup_{t>0}  \int_{\mathbb S^{2n-1}}|f(x-ty) | d\si(y,z), 
$$
since $\mm(f,g)(x)\le \|g\|_{L^\nf}\mm^0(f)(x)$.
This expression inside the supremum is a Fourier multiplier operator of the form
$$
\int_{\bbr^{2n}}\wh {|f|}(\xi)\de_{0}(\eta)\wh{d\si}(t\xi,t\eta)e^{2\pi ix\cdot(\xi+\eta)}d\xi d\eta
=\int_{\bbr^{n}}\wh {|f|}(\xi)\wh{d\si}(t\xi,0)e^{2\pi ix\cdot\xi}d\xi 
$$
where $\de_0$ is the Dirac mass and 
$$
\wh{d\si}(t(\xi,0)) =  2\pi \,   \frac{J_{n-1} (2 \pi t |(\xi,0)| )}{|t (\xi,0) |^{n-1}}.
$$ 
The multiplier $\wh{d\si} (\xi,0) $ is smooth everywhere and decays like $|\xi|^{-(n-\frac12)}$ as $|\xi|\to \nf$ and 
its gradient has a similar decay.

The following result is   in \cite[Theorem B]{RdF86} (see also \cite{DRdF}): 

\noindent {\bf Theorem A.}
{\it 
Let $m(\xi)$ be a $\mathcal C^{[n/2]+1}(\rn)$ function that satisfies $|\p^\ga m(\xi)|\le (1+|\xi|)^{-a}$  for all $|\ga|\le [n/2]+1$ with $a\ge (n+1)/2$. Then
the maximal operator 
$$
f\mapsto \sup_{t>0} \big| \big( \wh{f}(\xi) m(t\xi) \big) \spcheck \big| 
$$
maps $L^p(\rn)$ to itself for $1<p<\nf$. 
}

{ In order to have $n-\f12\ge \f{n+1}{2}$ we must assume that $n\ge 2$. } 
It follows from Theorem A that $\mm^0$ is bounded on $L^p$ when $1<p\le \nf$ and $n\ge 2$. This completes the proof of Proposition \ref{infty}.
 
\end{proof}

\section{The point $(2,2,1)$}
Next we turn to the main estimate of this article which concerns the point $L^2\times L^2\to L^1$,  
i.e., the estimate $\|\mm(f,g)\|_{L^1}\le \|f\|_{L^2}\|g\|_{L^2}$. 

\begin{prop}\label{small} 
If $\psi$ is  in $\mathcal C_0^{\nf}(\mathbb R^{2n})$, then the maximal function
$$
M(f,g)(x)=\sup_{t>0}\bigg|\int_{\mathbb R^{2n}}\wh f(\xi)\wh g(\eta)
\psi(t\xi,t\eta)e^{2\pi ix\cdot(\xi+\eta)}d\xi d\eta\bigg|
$$
satisfies that for any $1<p_1,p_2<\nf$ and $1/p=1/p_1+1/p_2$, there exists
a constant $C$ independent $f$ and $g$ such that
$$
\|M(f,g)\|_{L^p(\mathbb R^n)}\le C\, \|f\|_{L^{p_1}(\mathbb R^n)}
\|g\|_{L^{p_2}(\mathbb R^n)}
$$
\end{prop}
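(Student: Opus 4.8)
The plan is to reduce the maximal function $M(f,g)$ to something controlled by a product of Hardy--Littlewood maximal operators, exploiting only the compact support and smoothness of $\psi$ (no decay or cancellation is needed since $\psi\in\mathcal C_0^\infty$). First I would recognize that, fixing $t>0$, the operator
$$
(f,g)\mapsto\int_{\mathbb R^{2n}}\wh f(\xi)\wh g(\eta)\psi(t\xi,t\eta)e^{2\pi ix\cdot(\xi+\eta)}d\xi d\eta
$$
is a bilinear Fourier multiplier with symbol $\psi(t\xi,t\eta)$, whose kernel is $t^{-2n}\Phi(x/t,x/t)$ type, i.e. writing $K=\wh\psi$ (the inverse Fourier transform of $\psi$ on $\mathbb R^{2n}$) the operator equals
$$
\int_{\mathbb R^n}\int_{\mathbb R^n} t^{-2n}K\!\left(\tfrac{x-y}{t},\tfrac{x-z}{t}\right)f(y)g(z)\,dy\,dz .
$$
Since $\psi$ is smooth with compact support, $K$ is a Schwartz function on $\mathbb R^{2n}$; in particular $|K(u,v)|\le C_N(1+|u|+|v|)^{-N}$ for every $N$.

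The key step is then the standard pointwise domination: for a Schwartz kernel,
$$
\left|\int\!\!\int t^{-2n}K\!\left(\tfrac{x-y}{t},\tfrac{x-z}{t}\right)f(y)g(z)\,dy\,dz\right|\le C\,\mathcal M_{HL}f(x)\,\mathcal M_{HL}g(x),
$$
\emph{uniformly in $t>0$}, where $\mathcal M_{HL}$ is the Hardy--Littlewood maximal operator. I would prove this by decomposing $\mathbb R^{2n}$ into the dyadic annuli $\{2^{k}\le 1+|u|+|v|<2^{k+1}\}$, bounding $|K|$ by $C_N 2^{-kN}$ on the $k$-th piece, and estimating the corresponding average of $|f(y)g(z)|$ over $\{|x-y|+|x-z|\lesssim 2^k t\}$ by $2^{2nk}$ times the product of the averages of $|f|$ over $\{|x-y|\lesssim 2^kt\}$ and of $|g|$ over $\{|x-z|\lesssim 2^kt\}$, each of which is $\le\mathcal M_{HL}f(x)$ resp. $\le\mathcal M_{HL}g(x)$; summing in $k$ with $N$ chosen larger than $2n$ gives a convergent series independent of $t$. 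Taking the supremum over $t>0$ therefore gives $M(f,g)(x)\le C\,\mathcal M_{HL}f(x)\,\mathcal M_{HL}g(x)$ pointwise.

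Finally I would conclude by Hölder's inequality together with the classical $L^{p_i}$ boundedness of $\mathcal M_{HL}$ for $1<p_i<\infty$:
$$
\|M(f,g)\|_{L^p}\le C\,\|\mathcal M_{HL}f\cdot\mathcal M_{HL}g\|_{L^p}\le C\,\|\mathcal M_{HL}f\|_{L^{p_1}}\|\mathcal M_{HL}g\|_{L^{p_2}}\le C\,\|f\|_{L^{p_1}}\|g\|_{L^{p_2}},
$$
using $1/p=1/p_1+1/p_2$. The only mild obstacle is the bookkeeping in the dyadic estimate — making sure the region $\{|x-y|+|x-z|\le 2^{k+1}t\}$ is correctly split so that the $y$-average and the $z$-average decouple into genuine Hardy--Littlewood averages — but this is routine; no oscillatory or orthogonality input is required here, which is exactly why this ``smooth, compactly supported symbol'' piece is the easy part of the overall argument, with the real difficulty deferred to the pieces of $\wh{d\sigma}$ that actually decay and oscillate.
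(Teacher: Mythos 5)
Your argument is correct and is exactly the standard proof that the paper declares ``standard and omitted'': since $\psi\in\mathcal C_0^\infty(\mathbb R^{2n})$ the kernel is Schwartz, the dyadic decomposition gives the uniform-in-$t$ pointwise bound $M(f,g)\le C\,\mathcal M_{HL}f\cdot\mathcal M_{HL}g$, and H\"older plus the Hardy--Littlewood maximal theorem conclude. The only (immaterial) quibble is the notation $K=\wh\psi$ for the inverse transform; everything else, including the decoupling of the $y$- and $z$-averages via $\{|x-y|+|x-z|\lesssim 2^kt\}\subset\{|x-y|\lesssim 2^kt\}\times\{|x-z|\lesssim 2^kt\}$, is sound.
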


The proof of Proposition~\ref{small} is standard and is omitted. 
Next, we decompose $\mm$. We fix $\vp_0\in\mathcal C_0^{\nf}(\mathbb R^{2n})$ such that
$ 
\chi_{B(0,1)}\le \vp_0\le\chi_{B(0,2)}
$ 
and we let $\vp(\xi,\eta) =\vp_0 ((\xi,\eta)) -\vp_0(2(\xi,\eta))$. 
For $j\ge 1$ define
$$m_j(\xi,\eta)= \wh{d\si}(\xi,\eta) \vp(2^{-j}(\xi,\eta)) $$ and for $j=0$ define 
$m_0(\xi,\eta)= \wh{d\si}(\xi,\eta) \vp_0 (\xi,\eta)  $. Then we have 
$$
\wh{d\si}=m=\sum_{j\ge0}m_j
$$
where  $\wh{d\si}(\xi,\eta) = 2\pi \frac{ J_{n-1} (2\pi (\xi,\eta))}{|(\xi,\eta)|^{n-1}}. $ 
Setting 
$$
\mm_j (f,g)(x)=\sup_{t>0}\bigg|\int_{\mathbb R^{2n}}\wh f(\xi)\wh g(\eta)
m_j(t\xi,t\eta)e^{2\pi ix\cdot(\xi+\eta)}d\xi d\eta\bigg| ,
$$
  we have the pointwise estimate 
\begin{equation}\label{sum}
\mm(f,g)(x)\le \sum_{j\ge0}\mm_j(f,g)(x) , \qq x\in \mathbb R^n.
\end{equation}

\begin{prop}\label{De}
For $n\ge 8$,
there exist positive constants $C$ and $\de_n =\f n5-\f32$ such that for all   $j\ge1$  and all functions $f,g\in L^2(\mathbb R^n)$ we have 
\begin{equation}\label{Deq}
\|\mm_j(f,g)\|_{L^1}\le C \, j \, 2^{-\de_n j}\|f\|_{L^2}\|g\|_{L^2}.
\end{equation}
\end{prop}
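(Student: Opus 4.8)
The plan is to linearize the supremum in $t$, dominate the linearized operator by square functions, and then estimate these by expanding the symbol $m_j$ in a wavelet basis and exploiting the oscillation of $\wh{d\si}$; the quantitative form of the last step is the heart of the matter and is what produces the exponent $\de_n$.

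\emph{Linearization and reduction to square functions.} Choose a measurable $t=t(x)$ attaining the supremum in $\mm_j$. For $j\ge1$ the symbol $m_j$ is supported in the annulus $\{|(\xi,\eta)|\sim2^{j}\}$ and vanishes identically near the origin, so for each $x$ the function $t\mapsto F_t(x):=\int_{\mathbb R^{2n}}\wh f(\xi)\wh g(\eta)\,m_j(t\xi,t\eta)\,e^{2\pi ix\cdot(\xi+\eta)}\,d\xi\,d\eta$ vanishes at $t=0$ and decays fast enough at $t=\infty$ for the integrals below to converge. The elementary inequality $\sup_{t}|F_t(x)|^{2}\le2\int_{0}^{\infty}|F_t(x)|\,|t\partial_tF_t(x)|\,\tfrac{dt}{t}\le2\big(\int_{0}^{\infty}|F_t(x)|^{2}\tfrac{dt}{t}\big)^{1/2}\big(\int_{0}^{\infty}|t\partial_tF_t(x)|^{2}\tfrac{dt}{t}\big)^{1/2}$, together with one more application of the Cauchy--Schwarz inequality in $x$, reduces \eqref{Deq} to an $L^{2}\times L^{2}\to L^{1}$ bound, with the stated decay, for the square function $\big(\int_0^\infty|F_t|^2\tfrac{dt}{t}\big)^{1/2}$; the variant in which $m_j$ is replaced by the function $\zeta\mapsto\zeta\cdot\nabla m_j(\zeta)$ (again a symbol of the same structure on the same annulus) is handled by the identical argument.

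\emph{Wavelets and the role of the oscillation.} One next expands $m_j$ in a tensor product of a smooth, compactly supported orthonormal wavelet basis of $L^{2}(\mathbb R^{n})$ with many vanishing moments, $m_j(\xi,\eta)=\sum_{\lambda,\mu}b^{\,j}_{\lambda,\mu}\psi_\lambda(\xi)\psi_\mu(\eta)$. Every resulting piece of the bilinear operator factors through a pair of one-variable operators $f\mapsto f*\Phi^\lambda_t$ with $\Phi^\lambda_t$ a $t$-dilated modulated wavelet, and the vanishing moments make the corresponding one-variable (maximal and square) operators bounded on $L^{2}(\mathbb R^{n})$ with norms controlled by a power of the wavelet scale and position. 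The naive estimate obtained by putting absolute values inside and summing $\sum_{\lambda,\mu}|b^{\,j}_{\lambda,\mu}|\,\|\cdot\|_2\,\|\cdot\|_2$ fails — it gives a bound growing \emph{exponentially} in $j$ — because $\wh{d\si}(\zeta)$ does not decay inside $\{|\zeta|\sim2^{j}\}$, it only oscillates there, like $|\zeta|^{-(n-1/2)}e^{\pm2\pi i|\zeta|}$. The remedy is to precede the wavelet expansion with a decomposition of the annulus into boxes of an intermediate side length $2^{\theta j}$ ($\theta\le\tfrac12$): on such a box the chirp $e^{\pm2\pi i|(\xi,\eta)|}$ is, up to a quadratic-phase error that is controllable in this range of $\theta$, a tensor modulation $e^{2\pi iv_1\cdot\xi}e^{2\pi iv_2\cdot\eta}$, so $m_j$ restricted to the box is essentially a product of two one-variable smooth bumps of size $\sim2^{-j(n-1/2)}$; after summing over boxes one groups the pieces by their output frequency $Q_1+Q_2$, which — being band-limited to a box of side $2^{\theta j}$ — restores the $L^{2}$-orthogonality that the naive argument had thrown away.

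\emph{The main obstacle.} Making the last step quantitative requires balancing several competing quantities: the amplitude gain $2^{-j(n-1/2)}$; the number of boxes in the decomposition (which grows like a power of $2^{j}$ governed by $\theta$); the orthogonality and Bernstein-type gains coming from localizing the output to scale $2^{\theta j}$; the $L^{2}$ norms of the one-variable maximal pieces, which degrade when the pieces are modulated to high frequency; and the $\sim j$-fold redundancy of the sum over the wavelet scales between $2^{-j}$ and $1$, together with the error terms generated by linearizing the chirp. Optimizing the free parameter $\theta$ against all of this is precisely what yields the exponent $\de_n=\tfrac n5-\tfrac32$ and, consequently, the hypothesis $n\ge8$ (under which $\de_n>0$), while the borderline, logarithmically divergent geometric sum over the $\sim j$ relevant scales produces the linear factor $j$ in \eqref{Deq}. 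I expect this optimization and the associated multi-parameter bookkeeping to be the only genuinely delicate point; the remaining ingredients — the square-function reductions above, the $L^{2}$ bounds for the modulated Littlewood--Paley maximal and square operators, and the treatment of the ``unbalanced'' region in which $|\xi|$ and $|\eta|$ differ greatly in size (there $\mm_j$ behaves like a one-dimensional Fourier multiplier composed with a bounded averaging operator, as in Section~2) — are routine.
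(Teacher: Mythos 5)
Your overall architecture (linearize the supremum, pass to $g$-functions via $\sup_t|F_t|^2\le 2\int_0^\infty|F_s|\,|s\partial_sF_s|\tfrac{ds}{s}$, split off the unbalanced region $|\xi|\not\sim|\eta|$, and attack the balanced part with a wavelet decomposition of the symbol) is the same as the paper's. But there is a genuine gap at exactly the point you flag as ``the only genuinely delicate point'': the quantitative $L^2\times L^2\to L^1$ bound for the localized pieces is never proved, and the exponent $\de_n=\tfrac n5-\tfrac32$ is asserted rather than derived from your box/chirp-linearization scheme. Worse, the mechanism you propose for closing the argument --- grouping the boxes by output frequency to ``restore $L^2$-orthogonality'' --- does not work as stated, because the target space is $L^1$, where almost-orthogonality of the output pieces gives nothing directly; this is precisely the obstruction the paper identifies in the introduction as the reason Plancherel/Rubio de Francia's argument fails in the bilinear setting. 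Overcoming it is the content of the external input the paper actually uses: Proposition B (Corollary 8 of \cite{GraHeHon}), namely $\|T_m\|_{L^2\times L^2\to L^1}\le C\,C_M^{1/5}\|m\|_{L^2}^{4/5}$ for symbols with $\|\p^\al m\|_{L^\nf}\le C_M$, whose proof interpolates between a trivial bound and a combinatorial/column--row counting of wavelet coefficients, not a chirp linearization. Without either citing such a result or supplying the counting argument, your proof of the balanced part is incomplete, and there is no way to check that your free parameter $\theta$ optimizes to the claimed $\tfrac n5-\tfrac32$ (the specific $\tfrac45$--$\tfrac15$ split in Proposition B is what produces $\de_n=\tfrac{1}{5}\cdot\tfrac{2n-3}{2}-\tfrac45\cdot\tfrac32=\tfrac n5-\tfrac32$ once one computes $\|\wdt m_j^1\|_{L^2}\lesssim 2^{3j/2}$ and $\|\p^\al \wdt m_j^1\|_{L^\nf}\lesssim 2^{-j(2n-3)/2}$).

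Two smaller discrepancies. First, for the balanced (diagonal) part the paper does not use the square-function reduction at all: it writes $\sup_t|T^1_{j,t}|\le\int_0^\nf|\wdt T^1_{j,s}|\tfrac{ds}{s}$ and bounds this $L^1(\tfrac{ds}{s})$ average directly (Lemma~\ref{Dia}); the factor $j$ in \eqref{Deq} then arises cleanly from $\int_{1/|\xi|}^{2^j/|\xi|}\tfrac{dt}{t}\sim j$, i.e.\ from the $\sim j$ dyadic scales of $t$ for which the dilated annulus meets the frequency support of $f$ --- consistent with, but more concrete than, your ``logarithmically divergent sum over scales.'' Second, your treatment of the unbalanced region as ``routine'' is essentially right, but note the paper needs a genuinely pointwise domination $|T_{\si_1}(f,g)|\le C2^{\ep j}2^{-j\de}M(g)\,|T_m(f)|$ there (again imported from \cite{GraHeHon}), and it is this part, not the balanced one, that is handled by the product of two $g$-functions $G_j\wdt G_j$. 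If you fill in the key wavelet estimate (or cite Proposition B), the rest of your outline can be made to work.
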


  Proposition~\ref{De} will be proved in the next section. In the remaining of this section we state and prove a lemma needed for its proof. 

\begin{lm}\label{Dia}
Suppose that $\si_1(\xi,\eta)$ is defined on $\mathbb R^{2n}$ and for some $\de>0$ it satisfies:  

(i) for any multiindex  $|\al|\le M=4n$, there exists a positive constant $C_{\al}$ independent of $j$ such that $\|\p^\al(\si_1(\xi,\eta))\|_{L^{\nf}}\le C_{\al}2^{-j\de}$,

(ii) supp $\si_1\subset \{(\xi,\eta)\in\mathbb R^{2n}: |(\xi,\eta)|\sim 2^j, c_12^{-j}\le 
\f{|\xi|}{|\eta |}\le c_22^j\}$.\\
Then $T(f,g)(x):=\int_0^{\nf}|T_{\si_t}(f,g)(x)|\f{dt}t$ is bounded from
$L^2(\rn)\times L^2(\rn)$ to $L^1(\rn)$ with   bound at most  a multiple of $j\|\si_1\|_{L^2}^{4/5}2^{-j\de/5}$,
where $\si_t(\xi,\eta)=\si_1(t\xi,t\eta)$.
\end{lm}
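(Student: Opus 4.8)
The plan is to expand $\si_1$ in a compactly supported \emph{tensor‑product} wavelet basis of $\lt(\bbr^{2n})$, use the product structure to factor each bilinear multiplier into a pair of linear frequency‑localized operators, and then balance a Plancherel (square‑function) estimate against the pointwise bound on the wavelet coefficients coming from (i). Concretely, fix a sufficiently smooth Daubechies scaling function and wavelet on $\rn$ and form the orthonormal basis $\{\Psi_\lambda\}_\lambda$ of $\lt(\bbr^{2n})$ whose elements are tensor products $\Psi_\lambda(\xi,\eta)=u_\lambda(\xi)\,w_\lambda(\eta)$, with $u_\lambda,w_\lambda$ translates/dilates of the generators on $\rn$. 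Writing $\si_1=\sum_\lambda b_\lambda\Psi_\lambda$, $b_\lambda=\langle\si_1,\Psi_\lambda\rangle$, and using $T_{\Psi_\lambda(t\,\cdot)}(f,g)=\big(u_\lambda(tD)f\big)\big(w_\lambda(tD)g\big)$ with $u_\lambda(tD)f=\big(\wh f(\xi)\,u_\lambda(t\xi)\big)\spcheck$, one gets
$$T_{\si_t}(f,g)=\sum_\lambda b_\lambda\,\big(u_\lambda(tD)f\big)\big(w_\lambda(tD)g\big),\qquad T(f,g)(x)\le\int_0^\nf\sum_\lambda|b_\lambda|\,\big|u_\lambda(tD)f(x)\big|\,\big|w_\lambda(tD)g(x)\big|\,\tf{dt}t.$$

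Two features of the coefficients are used: Plancherel gives $\sum_\lambda|b_\lambda|^2=\|\si_1\|_{\lt}^2$, and (i) together with the vanishing moments forces rapid decay of $b_\lambda$ in the fine scales, while (ii) confines $\mathrm{supp}\,\si_1$ to the annulus $|(\xi,\eta)|\sim 2^j$ with $1\ls|\xi|,|\eta|\ls 2^j$; hence only $O(j)$ wavelet scales carry non‑trivial mass, and on those $|b_\lambda|\ls 2^{-j\de}$ (via $|b_\lambda|\le\|\si_1\|_{\lnf}\|\Psi_\lambda\|_{\lo}$, the few very coarse pieces being treated directly as smooth single‑scale multipliers). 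I would also first peel off Littlewood–Paley decompositions $f=\sum_a\Delta_a f$, $g=\sum_b\Delta_b g$: by (ii), $T_{\si_t}(\Delta_a f,\Delta_b g)$ vanishes unless $\max(a,b)$ is within $O(1)$ of $j$, and for each such admissible pair the support condition pins $t$ to a fixed dyadic interval $I_{a,b}$ (so $\int_{I_{a,b}}\tf{dt}t\approx 1$) and restricts the sum over $\lambda$ to wavelets whose frequency supports meet the relevant boxes.

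For each admissible pair one applies the Cauchy–Schwarz inequality in $t$ over $I_{a,b}$, splits the coefficient as $|b_\lambda|=|b_\lambda|^{4/5}|b_\lambda|^{1/5}\le|b_\lambda|^{4/5}\big(C2^{-j\de}\big)^{1/5}$, and runs Hölder's inequality in $\lambda$ with exponents $5/2$ and $5/3$. The exponent‑$5/2$ factor collects $\big(\sum_\lambda|b_\lambda|^2\big)^{2/5}=\|\si_1\|_{\lt}^{4/5}$, and the exponent‑$5/3$ factor is a product of two $\ell^{5/3}$‑valued Littlewood–Paley‑type square functions of $f$ and of $g$ assembled from the pieces $u_\lambda(tD)$, $w_\lambda(tD)$; these are estimated using the almost‑orthogonality (bounded frequency overlap) of the wavelet pieces on each block and the fact that only $O(j)$ scales and a controlled number of translates are active. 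Combined with $\int_{I_{a,b}}\tf{dt}t\approx 1$, this bounds each admissible pair by $C\,\|\si_1\|_{\lt}^{4/5}2^{-j\de/5}\|\Delta_a f\|_{\lt}\|\Delta_b g\|_{\lt}$; summing over the $O(j)$ admissible pairs and using $\sum_a\|\Delta_a f\|_{\lt}^2\ls\|f\|_{\lt}^2$, $\sum_b\|\Delta_b g\|_{\lt}^2\ls\|g\|_{\lt}^2$ yields the overall factor $j$ and the asserted estimate $\|T(f,g)\|_{\lo}\ls j\,\|\si_1\|_{\lt}^{4/5}2^{-j\de/5}\,\|f\|_{\lt}\|g\|_{\lt}$.

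The main obstacle is precisely this last interpolation step. The Hölder split into $\ell^{5/3}$ produces vector‑valued square functions over collections whose cardinality is a power of $2^{jn}$, so a crude $\ell^2\hookrightarrow\ell^{5/3}$ embedding would leak an inadmissible power of $2^{jn}$. Avoiding this forces one to use the conic localization (ii) sharply — to bound the number of wavelets active at a given frequency block and a given $t$, and to see that the $\{u_\lambda(tD)f\}$ (resp. $\{w_\lambda(tD)g\}$) genuinely form a frequency‑orthogonal family on each block — so that the square functions are bounded with the correct constants rather than by dimension counting; the Littlewood–Paley reduction that turns $\tf{dt}t$ into an $O(1)$ average is what makes this counting tractable. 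The remaining ingredients (the factoring identity, the coefficient bounds, and the orthogonality of the wavelet system) are routine.
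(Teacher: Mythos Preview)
Your proposal has a genuine gap, and you have correctly located it yourself. The H\"older split in $\lambda$ with exponents $5/2$ and $5/3$ leaves you with $\big(\sum_\lambda |u_\lambda(tD)f|^{5/3}|w_\lambda(tD)g|^{5/3}\big)^{3/5}$, and there is no way to control this by $\|\Delta_a f\|_{\lt}\|\Delta_b g\|_{\lt}$ without paying a power of the cardinality of the active $\lambda$-set, which is a power of $2^{jn}$. The frequency supports of the $u_\lambda$ at a fixed wavelet scale are essentially disjoint translates, so Plancherel gives only the $\ell^2$ bound $\sum_\lambda\|u_\lambda(tD)f\|_{\lt}^2\ls\|f\|_{\lt}^2$; passing to $\ell^{5/3}$ costs exactly what you fear. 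The conic localization (ii) does not help: it confines $\mathrm{supp}\,\si_1$ to $1\ls|\xi|,|\eta|\ls 2^j$, but that region still meets $\sim 2^{2jn}$ unit wavelet cubes at the finest relevant scale. The exponents $4/5$ and $1/5$ in the target estimate do \emph{not} come from a single H\"older step in the wavelet index; in \cite{GraHeHon} they arise from a column/row reorganization of the coefficients together with a separate pointwise product estimate, none of which your sketch reproduces.

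The paper's route is much shorter and sidesteps the obstacle entirely by using Proposition~B of \cite{GraHeHon} as a black box at \emph{frozen} $t$: it gives $\|T_{\si_1}(f,g)\|_{\lo}\le C\,\|\si_1\|_{\lt}^{4/5}(2^{-j\de})^{1/5}\|f^j\|_{\lt}\|g^j\|_{\lt}$, where (ii) allows one to insert the frequency cutoff $\wh{f^j}=\wh f\,\chi_{\{c_1\le|\xi|\le c_22^{j+1}\}}$. The dilation identity $T_{\si_t}(f,g)(x)=t^{-2n}T_{\si_1}(f_t,g_t)(x/t)$ then yields
\[
\|T_{\si_t}(f,g)\|_{\lo}\ls\|\si_1\|_{\lt}^{4/5}2^{-j\de/5}\,\|\wh f\,\chi_{E_{j,t}}\|_{\lt}\|\wh g\,\chi_{E_{j,t}}\|_{\lt},\qquad E_{j,t}=\{c_1/t\le|\xi|\le c_22^{j}/t\}.
\]
Integrating in $\tf{dt}t$ and applying Cauchy--Schwarz in $t$ reduces matters to $\int_0^\nf\|\wh f\,\chi_{E_{j,t}}\|_{\lt}^2\,\tf{dt}t$; since for each fixed $\xi$ the set $\{t:\xi\in E_{j,t}\}$ has logarithmic length $\sim j$, this is $\ls j\,\|f\|_{\lt}^2$. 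That is the origin of the factor $j$, not a count of Littlewood--Paley pairs. In short: do not unpack the wavelet machinery inside this lemma; quote Proposition~B at each $t$, scale, and integrate.
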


\begin{proof}[Proof of Lemma~\ref{Dia}]
A crucial tool in the proof of Lemma \ref{Dia} is the following result  \cite[Corollary 8]{GraHeHon}:  

\medskip
\noindent{\bf Proposition B.} {\it 
Let $m \in L^2(\mathbb R^{2n})$ and $C_M>0$ satisfy 
 $\|\p^\al m\|_{L^{\nf}}\le C_{M}$ for each multiindex  $|\al |\le M=16n$. 
Then the bilinear operator $T_m$ associated with  the multiplier $m$ satisfies 
 $$
 \|T_m \|_{L^2\times L^2\to L^1}\le C\, C_M^{1/5}\|m\|_{L^2}^{4/5} .
 $$
}

Using Proposition B, setting 
  $\wh {f^j}= \widehat{f}\chi_{\{c_1\leq |\xi| \leq c_2 2^{j+1}\}}$,  
by the support of $\si_1$ we obtain  that 
$$
\|T_{\si_1}(f,g)\|_{L^1}\le C\|\si_1\|_{L^2}^{4/5}2^{-j\de/5}
\|f^j\|_{L^2} \|g^j\|_{L^2} \, .
$$
 Notice that
$T_{\si_t}(f,g)(x)=t^{-2n}T_{\si_1}(f_t,g_t)(\f xt)$, where 
$\wh{f_t}(\xi)=\wh f(\xi/t)$.
Then 
\begin{align*}
\|T_{\si_t}(f,g)\|_{L^1}\,\,\le &\,\, C\|\si_1\|_{L^2}^{4/5}2^{-j\de/5}t^{-n}\|\wh f(\xi/t)\chi_{E_{j,0}}\|_{L^2}\|\wh g(\eta/t)\chi_{E_{j,0}}\|_{L^2}\\
= &\,\, C\|\si_1\|_{L^2}^{4/5}2^{-j\de/5} \|\wh f\,\chi_{E_{j,t}}\|_{L^2}\|\wh g\,\chi_{E_{j,t}}\|_{L^2},
\end{align*}
where $E_{j,t}=\{\xi\in\mathbb R^n: \f{c_1}t\leq |\xi| \leq \f{2^{j}c_2}t\}$.

As a result we obtain 
\begin{align*}
&\int_{\mathbb R^n}\int_0^{\nf}|T_{\si_t}(f,g)|\f{dt}tdx\\
\le&\, C\|\si_1\|_{L^2}^{4/5}
2^{-j\de/5}\int_0^{\nf}\|\wh f\, \chi_{E_{j,t}}\|_{L^2}\|\wh g\,\chi_{E_{j,t}}\|_{L^2}\f{dt}t\\
\le &\, C\|\si_1\|_{L^2}^{4/5}
2^{-j\de/5}\bigg(\int_0^{\nf}\int_{\mathbb R^n}|\wh f\,\chi_{E_{j,t}}|^2d\xi\f{dt}{t}\bigg)^{\frac12}
\bigg(\int_0^{\nf}\int_{\mathbb R^n}|\wh g\, \chi_{E_{j,t}}|^2d\xi\f{dt}{t}\bigg)^{\frac12}.
\end{align*}

We control the last term as follows:
$$
\int_0^{\nf}\int_{\mathbb R^n}|\wh f\, \chi_{E_{j,t}}|^2d\xi\f{dt}{t}\le 
C\int_{\mathbb R^n}\int_{1/|\xi|}^{2^j/|\xi|}\f{dt}{t}|\wh f(\xi)|^2d\xi\le Cj\|f\|_{L^2}^2
$$
and thus we deduce  
$$
\|T(f,g)(x)\|_{L^1}\le C\|\si_1\|_{L^2}^{4/5}2^{-j\de/5}j\|f\|_{L^2}\|g\|_{L^2}   .
$$ 
This completes the proof of Lemma~\ref{Dia}.
\end{proof}

\section{Proof of Proposition~\ref{De}}

\begin{proof}
Estimate \eqref{Deq} is automatically holds for finitely many terms in view of Proposition \ref{small},
so we fix a large $j$ and define
$$
T_{j,t}(f,g)(x)=\int_{\mathbb R^{2n}}\wh f(\xi)\wh g(\eta)
m_j(t\xi,t\eta)e^{2\pi ix\cdot(\xi+\eta)}d\xi d\eta .
$$

Take a smooth function $\rho$ on $\mathbb R$ such that $\chi_{[\ep-1,1-\ep]}\le\rho\le \chi_{[-1,1]}$.
Define $m_j^1(\xi,\eta)=m_j(\xi,\eta)\rho(\tf1j(\log_2\tf{|\xi|}{|\eta|}))$, then we have a smooth
  decomposition of $m_j$ with 
$m_j=m_j^1+m_j^2$.
On the support of $m_j^{1}$ we have $C^{-1}2^{-j}|\xi| \le |\eta|\le C2^j|\xi|$ and on the support of 
$m_j^{2}$ we have  $2^{j(1-\ep)}|\xi| \lesssim |\eta|$ or $2^{j(1-\ep)}|\eta| \lesssim |\xi|$. 
We define
$$
\mm^i_j(f,g)=\sup_{t>0}|T_{j,t}^i(f,g)|,\q i\in \{1,2\},
$$
where 
$T_{j,t}^1$ and $T_{j,t}^2$ correspond to multipliers $m_j^1(t(\xi,\eta))$
and $m_j^2(t(\xi,\eta))$ respectively, such that
$T_{j,t}=T_{j,t}^1+T_{j,t}^2$. Then for $f,g$ Schwartz functions we have
\begin{align*}
  \mm_j^1(f,g)(x)=&\sup_{t>0}|T^1_{j,t}(f,g)(x)|\\
=&\sup_{t>0} \bigg|\int_0^ts\f{dT_{j,s}^1(f,g)}{ds}\f{ds}s\bigg|\\
\le & \int_0^{\nf}|\widetilde{T}_{j,s}^1(f,g)(x)|\f{ds}s,
\end{align*}
where $\widetilde{T}_{j,s}^1$ has  bilinear  multiplier
$\widetilde{m}^1_{j}(s\xi,s\eta)=(s\xi,s\eta)\cdot (\nabla m^1_j)(s\xi, s\eta)$,  a diagonal multiplier with nice
decay, which can be used to  establish the boundedness of the diagonal 
part  with the aid of Lemma~\ref{Dia}.

Recall that 
$$
m_j^1(\xi,\eta)=  \vp (2^{-j}( \xi,\eta)) 2\pi \frac{ J_{n-1} (2\pi (\xi,\eta))}{|(\xi,\eta)|^{n-1}}\rho(\tf1j(\log_2\tf{|\xi|}{|\eta|}))
$$
 for $j\ge 1$ 
and a calculation shows that $|\p_1(m_j^1)|$ is controlled by the sum of three terms      bounded by 
$C2^{-j(2n-1)/2}$, $C2^{-j(2n+1)/2}$ and $C\f1j2^{-j(2n-1)/2}$ respectively.
Indeed, when the derivative falls on $\phi$, we can bound it by 
$C2^{-j}2^{-j(n-1/2)}$ $= C2^{-j(n+1/2)}$.
If the derivative falls on the second part, using properties of Bessel functions (see, e.g., \cite[Appendix B.2]{CFA}),
we obtain the bound $C\frac{ J_{n} (2\pi (\xi,\eta))}{|(\xi,\eta)|^{n}}|\xi_1|\le C2^{-j(n- 1/2)}$.
For the last case, we can bound it by $C2^{-j(n- 1/2)}j^{-1}\f1{|\xi|}\f{\xi_1}{|\xi|}\le C 2^{-j(n- 1/2)}j^{-1}2^{-\ep j}$.
As a consequence we have  
$|\p_1(m_j^1)|\le C2^{-j(2n-1)/2}$. Then we can show that 
$|\p_1(\wdt m_j^1)|\le C2^{-j(2n-3)/2}$ and similar arguments give that
for any multiindex $\al$ we have $|\p^{\al}\wdt m_j^1|\le C2^{-j(2n-3)/2}$. 
Moreover, from this we can show  that
$$
\|\wdt m_j^1\|_2
\le C \bigg(\int_{|(\xi,\eta)|\sim 2^j}|2^{-j(n-\f32)}|^2d\xi d\eta\bigg)^{\f12}
\le C2^{-j(n-\f32)}2^{jn}\le C2^{\f{3 }2j} .
$$

 Applying Lemma \ref{Dia} 
to the function  $\wdt m_j^1(\xi,\eta)=(\xi,\eta)\cdot (\nabla m^1_j)(\xi, \eta)$ which satisfies the hypotheses with $\de=(2n-3)/2$, 
we obtain 
\begin{equation}\label{E1}
\|\mm_j^1(f,g)\|_{L^1}\le Cj \|\wdt m_j^1\|_{L^2}^{\f45} \, 2^{-j \f\de{5}}\|f\|_{L^2}\|g\|_{L^2}  
= Cj2^{j(\f32-\f n5)}\|f\|_{L^2}\|g\|_{L^2}.
\end{equation}
It remains to obtain an analogous estimate for $\mm_j^2$.

For the off-diagonal part $m_j^2$     we use a different decomposition involving 
$g$-functions. For $f,g\in \mathcal S(\mathbb R^n)$ we have 
\begin{align}
\mm_j^2(f,g)(x)\,=&\, \big(\sup_{t>0}|T^2_{j,t}(f,g)(x)|^2\big)^{\frac12}\notag \\
=&\,\bigg(\sup_{t>0}\bigg|2\int_0^tT^2_{j,s}(f,g)(x) \, s\f{dT_{j,s}^2(f,g)(x)}{ds}\f{ds}s\bigg|\bigg)^{\frac12}\notag \\
\le &\,\sqrt{2} \, \Big\{\Big(\int_0^{\nf}|T_{j,s}^2(f,g)|^2\f{ds}s\Big)^{\frac12}\Big(\int_0^{\nf}|\widetilde{T}_{j,s}^2(f,g)|^2\f{ds}s\Big)^{\frac12}\Big\}^{\frac12} \notag \\
=&\,\sqrt{2} \, \big(G_j(f,g)(x)\widetilde G_j(f,g)\big)^{\frac12}. \label{mmid}
\end{align}
Here   $\widetilde{T}_{j,s}^2(f,g)$ has symbol      
$ 
\wdt m^2_j(s\xi,s\eta)=(s\xi,s\eta)\cdot(\nabla m_j^2)(s\xi,s\eta)
$ 
 and 
\begin{eqnarray*} 
G_j(f,g)(x)&\! \! = \! \! & \Big(\int_0^{\nf}|T_{j,s}^2(f,g)|^2\f{ds}s\Big)^{\frac12} \\
 \wdt G_j(f,g)(x)&\! \!  =\! \!  &\Big(\int_0^{\nf}|\wdt T_{j,s}^2(f,g)|^2\f{ds}s\Big)^{\frac12}.
 \end{eqnarray*}

\begin{lm}\label{ODia}
If a $\si_1(\xi,\eta)$   on $\mathbb R^{2n}$ satisfies 

(i) for any multiindex  $|\al|\le M=4n$, there exists a positive constant $C_{\al}$ independent of $j$ such that $\|\p^\al(\si_1(\xi,\eta))\|_{L^{\nf}}\le C_{\al}2^{-j\de}$,

(ii) supp $\si_1\subset \{(\xi,\eta)\in\mathbb R^{2n}: |(\xi,\eta)|\sim 2^j, |\xi|\ge 2^{j(1-\ep)}|\eta|, \text{ or } |\eta|\ge 2^{j(1-\ep)}|\xi|\}$,\\
then $T(f,g)(x):=(\int_0^{\nf}|T_{\si_t}(f,g)(x)|^2\f{dt}t)^{1/2}$ is bounded from
$L^2\times L^2$ to $L^1$ with   bound at most  a multiple of $2^{-j(\de-\ep)}$,
where $\si_t(\xi,\eta)=\si_1(t\xi,t\eta)$.
\end{lm}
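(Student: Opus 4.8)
The plan is to reduce Lemma~\ref{ODia} to the linear Rubio de Francia type square function, exploiting the fact that on the support of $\si_1$ one of the two frequency variables dominates the other by a factor $2^{j(1-\ep)}$, so that the bilinear multiplier essentially decouples into a one-variable multiplier times a bump. Without loss of generality I would split the support into the two symmetric pieces and treat the piece where $|\xi|\ge 2^{j(1-\ep)}|\eta|$; there the variable $\eta$ ranges over a ball of radius $\sim 2^{j\ep}$, which is small compared to the scale $2^j$ on which $\xi$ lives, while $\xi+\eta$ has size $\sim 2^j$ comparable to $\xi$. Thus I would write $T_{\si_t}(f,g)$ as an integral over $\eta$ of a linear operator in $f$ whose multiplier is $\si_1(t\xi, t\eta)$ viewed as a function of $\xi$ alone (with $\eta$, and $t$, as parameters), and then use the pointwise bound $|\wh g(\eta)|\le \|\wh g\|_{\infty}$ is too crude; instead I would insert a Littlewood--Paley-type decomposition in $\eta$ or simply apply Cauchy--Schwarz in $\eta$ against the measure of the $\eta$-support.

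Concretely, the key steps in order are as follows. First, fix the piece $|\xi|\ge 2^{j(1-\ep)}|\eta|$ and observe that on it $|(\xi,\eta)|\sim|\xi|\sim 2^j$ and $|\eta|\lesssim 2^{j\ep}$. Second, for each fixed $\eta$ regard $\xi\mapsto \si_1(t\xi,t\eta)$ as a linear multiplier $m^{\eta}_t(\xi)$; by hypothesis (i) it is $\mathcal C^{M}$ with $M=4n\ge [n/2]+1$, it is supported in $|\xi|\sim 2^j$, and all its derivatives up to order $M$ are $O(2^{-j\de})$ — note that differentiating in $\xi$ only brings factors of $t$ which are absorbed by the scaling $m^\eta_t(\xi)=m^\eta_1(t\xi)$. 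Third, form the square function $\big(\int_0^\infty |(\wh f\, m^\eta_t)\spcheck(x)|^2\,\tfrac{dt}{t}\big)^{1/2}$ and bound it in $L^2(dx)$: because $m^\eta_1$ is supported in an annulus $|\xi|\sim 2^j$, for each fixed $\xi$ the $t$-integral runs over an interval of length $O(1)$ in $\log t$, and Plancherel gives the bound $C\,2^{-j\de}\|f\|_{L^2}$ for this linear square function (this is a standard $L^2$ estimate, not even needing the full strength of Theorem~A). Fourth, integrate in $\eta$: by Minkowski's inequality in $L^2(dx)$ followed by Cauchy--Schwarz in $\eta$ over its support of measure $\lesssim 2^{jn\ep}$, I get
\begin{equation*}
\|T(f,g)\|_{L^1}\le \Big\|\int |\wh g(\eta)|\,\Big(\int_0^\infty|(\wh f\, m^\eta_t)\spcheck|^2\tfrac{dt}{t}\Big)^{1/2}d\eta\Big\|_{L^1}
\lesssim 2^{-j\de}\,2^{jn\ep/2}\,\|\wh g\|_{L^2}\,\|f\|_{L^2},
\end{equation*}
and since $\|\wh g\|_{L^2}=\|g\|_{L^2}$ this yields the claimed bound (up to adjusting the meaning of $\ep$, which is harmless as $\ep$ is a free small parameter). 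The symmetric piece $|\eta|\ge 2^{j(1-\ep)}|\xi|$ is handled identically with the roles of $f$ and $g$ swapped.

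The main obstacle I anticipate is the bookkeeping of the $\eta$-integration: applying Cauchy--Schwarz in $\eta$ against the full measure $\sim 2^{jn\ep}$ of the support produces a loss of $2^{jn\ep/2}$, which is only acceptable because $\ep$ can be chosen as small as we like relative to the gain $2^{-j\de}$; one must be careful that the final exponent reads $-(\de-\ep)$ after renaming, i.e.\ that the $n$-dependent factor $n\ep/2$ is absorbed into a redefinition of $\ep$ at the outset. A cleaner alternative, which I would use if the crude Cauchy--Schwarz loss turns out to be too large for the application in Proposition~\ref{De}, is to first do a dyadic decomposition of the $\eta$-variable into annuli $|\eta|\sim 2^k$ with $k\le j\ep$, apply the above argument on each annulus to get a gain $2^{-j\de}$ times $2^{kn/2}$, and sum in $k$; this replaces $2^{jn\ep/2}$ by a comparable quantity but makes the dependence transparent. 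Either way, the only genuinely quantitative input is the elementary $L^2$ bound for the linear $g$-function of a single annular piece, and everything else is Minkowski and Cauchy--Schwarz.
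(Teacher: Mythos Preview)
Your argument has a genuine gap at the step where you pass from the $L^1(dx)$ norm to a product of $L^2$ norms. After Minkowski in $L^2(dt/t)$ you correctly obtain
\[
T(f,g)(x)\le \int_{\mathbb R^n} |\wh g(\eta)|\,G^\eta(f)(x)\,d\eta,
\qquad G^\eta(f)(x)=\Big(\int_0^\infty |(\wh f\,m^\eta_t)\spcheck(x)|^2\tfrac{dt}{t}\Big)^{1/2},
\]
but then you invoke ``Minkowski in $L^2(dx)$'' and ``Cauchy--Schwarz in $\eta$ over its support of measure $\lesssim 2^{jn\ep}$''. Neither is available here. We need the $L^1(dx)$ norm, not $L^2(dx)$; and after pulling $\eta$ outside the $t$-integral the variable $\eta$ is no longer confined to a ball of radius $2^{j\ep}$: the support condition reads $|t\eta|\lesssim 2^{j\ep}$, so for each $\eta\neq 0$ there is a whole ray of $t$'s making $G^\eta(f)$ nontrivial, and $\eta$ ranges over all of $\mathbb R^n$. (Your identity $m^\eta_t(\xi)=m^\eta_1(t\xi)$ is the same mistake: $\si_1(t\xi,t\eta)\neq\si_1(t\xi,\eta)$, the $\eta$ argument scales with $t$.) Concretely, Fubini gives $\|T(f,g)\|_{L^1}\le\int|\wh g(\eta)|\,\|G^\eta(f)\|_{L^1(dx)}\,d\eta$, and you only control $\|G^\eta(f)\|_{L^2(dx)}$; even granting that, you would be left with $\|\wh g\|_{L^1}$, not $\|g\|_{L^2}$.

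What is missing is a \emph{pointwise} factorization of $T_{\si_t}(f,g)(x)$ into a piece depending on $g$ alone times a piece depending on $f$ alone, so that Cauchy--Schwarz in $x$ can split the $L^1$ norm into $L^2\times L^2$. The paper obtains exactly this from \cite[Section~5]{GraHeHon}: on the region $|\xi|\ge 2^{j(1-\ep)}|\eta|$ one has the pointwise bound
\[
|T_{\si_1}(f,g)(x)|\le C\,2^{j\ep}2^{-j\de}\,M(g)(x)\,|T_m(f)(x)|,
\]
with $M$ the Hardy--Littlewood maximal function and $T_m$ a linear operator satisfying $\|T_m f\|_{L^2}\le C\|\wh f\,\chi_{|\xi|\sim 2^j}\|_{L^2}$. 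After rescaling this gives $|T_{\si_t}(f,g)(x)|\le 2^{-j(\de-\ep)}M(g)(x)\cdot t^{-n}|T_m(f_t)(x/t)|$; taking $L^2(dt/t)$ and then Cauchy--Schwarz in $x$ yields the claimed $L^2\times L^2\to L^1$ bound with constant $2^{-j(\de-\ep)}$. Your ``decouple by freezing $\eta$'' idea is in spirit a crude version of this, but without the pointwise control over the $\eta$-integration (which is what produces $M(g)(x)$ rather than an integral against $|\wh g(\eta)|$) the argument cannot close.
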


\begin{proof}
 Recall that $\text{supp } m_j^{2}\subset \{(\xi,\eta): 2^{j(1-\ep)}|\xi| \lesssim |\eta|\text{ or }2^{j(1-\ep)}|\eta| \lesssim |\xi|\}$.
We consider only the part $\{|\xi|\ge  2^{j(1-\ep)}   |\eta|\}$ because the other part is similar. By
\cite[Section 5]{GraHeHon}  we have
$$
|T_{\si_1}(f,g)(x)|\le C2^{\ep j}2^{-j\de}M(g)(x)| T_m(f)(x)|,
$$
where
$M$ is the Hardy-Littlewood maximal function and  $T_m$ is a linear operator that satisfies 
$\|T_m(f)\|_{L^2}\le C\|\wh f\chi_{\{|\xi|\sim 2^j\}}\|_{L^2}$.
Then 
$$
|T_{\si_t}(f,g)(x)|\le 2^{-j(\de-\ep)}t^{-n}M(g)(x)T_m(f_t)(x/t) ,
$$
and 
\begin{align*}
\int_{\mathbb R^n}\bigg(\int_0^{\nf} &|T_{\si_t}(f,g)(x)|^2\f{dt}t\bigg)^{\frac12}dx\\
\le &\  C 2^{-j(\de-\ep)}\int_{\mathbb R^n}\bigg(\int_0^{\nf}t^{-2n}M(g)(x)^2
| T_m(f_t)(x/t) |^2 \frac{dt}{t} \bigg)^{\frac12}dx\\
\le &\   C2^{-j(\de-\ep)}\|M(g)\|_{L^2}\bigg (\int_{\mathbb R^n}
\int_{0}^{\nf} \big| t^{-n} T_m(f_t)(x/t) \big|^2 \frac{dt}{t} dx\bigg)^{\frac12}\\
\le &\   C2^{-j(\de-\ep)}\|g\|_{L^2}\Big(\int_{\mathbb R^n}|\wh f(\xi)|^2\int_{2^{j-1}/|\xi|}^{2^{j+1}/|\xi|}\f{dt}td\xi\Big)^{\frac12}\\
\le &\   C2^{-j(\de-\ep)}\|g\|_{L^2}\|f\|_{L^2}.
\end{align*}
This completes the proof of Lemma~\ref{ODia}.\end{proof}

We now return to the proof of Proposition \ref{De}. Notice that both $m^2_j(\xi,\eta)$ and $\wdt m^2_j(\xi,\eta)$
satisfy conditions of Lemma \ref{ODia} with $\de$ being either $(2n-1)/2$ or 
$(2n-3)/2$ respectively, so 
\begin{align*}
\|G_j(f,g)\|_{L^1}&\le \,   C2^{-j(2n-1)/2}\|f\|_{L^2}\|g\|_{L^2}\\ 
 \|\wdt G_j(f,g)\|_{L^1}&\le \,  C2^{-j(2n-3)/2}\|f\|_{L^2}\|g\|_{L^2} .
 \end{align*}
Using \eqref{mmid} we deduce 
\begin{equation}\label{E2}
\|\mm_j^2(f,g)\|_{L^1} \le \|G_j(f,g)\|_{L^1}^{1/2}
\|\widetilde G_j(f,g)\|_{L^1}^{1/2}\le C2^{-j(n-1)}\|f\|_{L^2}\|g\|_{L^2}.
\end{equation}
Combining \eqref{E1} and \eqref{E2} yields   Proposition \ref{De} with $\de_n=\f n5 -\f32$.
\end{proof}

\section{Interpolation}

By Proposition \ref{small} (for  term $j\le c_0$) and Proposition \ref{De} (for $j\ge c_0$),  for any $\de_n'<\de_n$, 
as a consequence of \eqref{sum} we obtain 
\begin{align*}
\|\mm(f,g)\|_{L^1}\le &
\sum_{j=0}^{\nf}C_{\de'_0}2^{-\de_n'j}\|f\|_{L^2}\|g\|_{L^2}
\le C_{\de'_0}\|f\|_{L^2}\|g\|_{L^2}.
\end{align*}
This establishes the  boundedness of $\mm$ from $L^2\times L^2$ to $L^1$ claimed in Theorem~\ref{Main} (recall 
$n\ge 8$). 
It remains to obtain estimates for other values of $p_1,p_2$. This is   achieved via bilinear interpolation.


Notice that when one index among $p_1$ and $p_2$ is equal to $1$, we have that 
$\mm_j$ maps $L^{p_1}\times L^{p_2}$ to  $L^{p,\nf}$  with norm  $\lesssim 2^j$. Indeed, this follows from the estimate 
$$
|\vp_j^{\vee}*(d\si)(y,z)|\le C_{N}2^j(1+|(y,z)|)^{-2{N}}\le C_{{N}}2^j(1+|y|)^{-{{N}}}(1+|z|)^{-{{N}}}
$$
which can be found, for instance, in     \cite[estimate (6.5.12)]{CFA}. Thus  we have 
$$\mm_j(f,g)(x)\le C2^jM(f)M(g)$$ where  
$M$ is  the Hardy-Littlewood maximal function.
 We pick two points 
\begin{align*}
 \vec Q_1&=\, (1/1,1/(1+\varepsilon), (2+\varepsilon)/(1+\varepsilon)) \\
  \vec Q_2&=\,(1/(1+\varepsilon),1/1 , (2+\varepsilon)/(1+\varepsilon))
\end{align*}
  and we also consider the point 
$\vec Q_0=(1/2,1/2,1)$. We interpolate the known estimates for $\mm_j$ at these three points. Letting $\ep$ go to $0$,  we obtain
that for $p>\f{2+2\de_n}{1+2\de_n}$ we have that $\mm_j$ maps $L^{p}(\rn) \times L^{p}(\rn) $ to $L^{p/2}(\rn)$ 
with a geometrically decreasing bound   in $j$. 
Recall that $\de_n= (2n-15)/10>0 $, so we need $n\ge 8$. 

 Thus summing over $j$ gives boundedness for $\mm$ 
from $L^{p}(\rn) \times L^{p}(\rn) $ to $L^{p/2}(\rn)$ when $p>\f{2+2\de_n}{1+2\de_n}$.  By interpolation we obtain 
boundedness for $\mm$ in the interior of a rhombus with vertices the points
$(1/\nf,1/\nf,1/\nf)$, $(\f{2n-3/2}{2n-1}, \f{1}{\nf} ,  \f{2n-3/2}{2n-1})$, $(\f{1}{\nf}, \f{2n-3/2}{2n-1},  \f{2n-3/2}{2n-1})$ and 
$(\f{1+2\de_n}{2+2\de_n}, \f{1+2\de_n}{2+2\de_n},\f{2+4\de_n}{2+2\de_n})$. 
The proof of Theorem \ref{Main} is now complete.

We remark that  is the largest region for which we presently know boundedness for $\mm$ in dimensions $n\ge 8$. 


\section{Counterexmaples}
 In this section we construct  counterexamples   indicating the unboundedness of the bilinear spherical maximal operator 
in a certain range. Our examples are inspired by Stein \cite{Stein1976} but the situation  is more  complicated.

\begin{prop}
The bilinear spherical maximal operator $\mm$ is unbounded from $L^{p_1}(\bbr^n)\times L^{p_2}(\bbr^n)$
to $L^p(\bbr^n)$
when  $1\le p_1,p_2 \le \infty$, $\tf1p=\tf1{p_1}+\tf1{p_2}$, $n\ge 1$, and $p\le \tf n{2n-1}$. 
In particular, 
$\mm$ is unbounded from $L^2(\bbr)\times L^2(\bbr)$ to $L^1(\bbr)$ when $n=1$.
\end{prop}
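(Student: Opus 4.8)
The plan is to exhibit, for each $p\le n/(2n-1)$, a pair of functions $f,g$ (or more precisely a family depending on a small parameter) for which $\mm(f,g)$ fails to lie in $L^p$. Following the spirit of Stein's counterexample for the linear spherical maximal function, I would take $f$ and $g$ to be mild singularities centered at the origin, say $f(y)=|y|^{-\alpha}\log^{-1}(1/|y|)\chi_{|y|\le 1/2}$ and similarly $g(y)=|y|^{-\beta}\log^{-1}(1/|y|)\chi_{|y|\le 1/2}$, with $\alpha$ chosen just below $n/p_1$ and $\beta$ just below $n/p_2$ so that $f\in L^{p_1}$ and $g\in L^{p_2}$ (the logarithmic factor is the usual device making the borderline exponent admissible). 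The key point is that for $|x|$ small, choosing the dilation parameter $t\approx |x|$ places the sphere $\{(x-ty,x-tz):(y,z)\in \mathbb S^{2n-1}\}$ so that a nontrivial portion of it passes simultaneously near the origin in \emph{both} coordinates, where $f$ and $g$ are large.

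The main estimate to carry out is a lower bound for the spherical average at scale $t\approx|x|$. Writing $x-ty$ and $x-tz$, one restricts to the piece of $\mathbb S^{2n-1}$ on which $|y-x/t|\lesssim\lambda$ and $|z-x/t|\lesssim\lambda$ for a suitable small $\lambda=\lambda(|x|)$; on this cap both $f(x-ty)\gtrsim (t\lambda)^{-\alpha}\log^{-1}$ and $g(x-tz)\gtrsim (t\lambda)^{-\beta}\log^{-1}$, while the $\sigma$-measure of the cap is $\gtrsim \lambda^{2n-1}$ (the sphere in $\mathbb R^{2n}$ has dimension $2n-1$). Optimizing, one finds the contribution to $\mm(f,g)(x)$ is $\gtrsim |x|^{-\alpha-\beta}|x|^{?}$ up to logarithms; plugging in $\alpha+\beta\to n/p$ and tracking the power of $\lambda$, the bound becomes $\gtrsim |x|^{-(2n-1)}$ up to a logarithmic factor near the critical relation $\alpha+\beta = n(2n-1)/n \cdot(1/p)$... more carefully, the homogeneity forces the exponent $-(2n-1)$ when $1/p=(2n-1)/n$, i.e. $p=n/(2n-1)$, and a strictly larger singularity when $p<n/(2n-1)$. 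Since $|x|^{-(2n-1)}$ is exactly the threshold for non-integrability of the $p$-th power near $0$ in $\mathbb R^n$ (as $p(2n-1)\ge n$), this shows $\mm(f,g)\notin L^p$ near the origin, with the logarithmic refinement handling the endpoint $p=n/(2n-1)$.

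I expect the main obstacle to be the geometric lower bound on the relevant spherical cap: one must verify that the set of $(y,z)\in\mathbb S^{2n-1}$ with \emph{both} $y$ and $z$ simultaneously within $\lambda$ of the prescribed point $x/t$ is nonempty and has measure comparable to $\lambda^{2n-1}$, which requires that the target point $(x/t,x/t)$ be at distance $O(\lambda)$ from the sphere $\mathbb S^{2n-1}$ — this is why one chooses $t$ so that $|(x/t,x/t)|=\sqrt2\,|x|/t$ is close to $1$, forcing $t\approx \sqrt2\,|x|$, and then a careful but elementary computation shows the cap has the claimed size. A secondary technical point is the bookkeeping of the logarithmic factors at the endpoint, and keeping $f,g$ genuinely in the open Lebesgue spaces while the output just barely escapes $L^p$. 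For the special case $n=1$: here $2n-1=1$ and $n/(2n-1)=1$, so the statement asserts unboundedness from $L^2(\mathbb R)\times L^2(\mathbb R)$ to $L^1(\mathbb R)$, which is the $p=1$, $p_1=p_2=2$ instance of the general bound and follows directly from the construction above with $\alpha=\beta=1/2$ (times the logarithmic correction), complementing Proposition~\ref{infty} which required $p>1$.
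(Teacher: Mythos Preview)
Your proposal has the geometry essentially right but the \emph{location} backwards, and this reverses the direction of the non-integrability condition. You work near the origin, claim $\mm(f,g)(x)\gtrsim |x|^{-(2n-1)}$ for $|x|$ small, and then assert this fails to lie in $L^p$ near $0$ ``as $p(2n-1)\ge n$.'' But the hypothesis is $p\le n/(2n-1)$, i.e.\ $p(2n-1)\le n$, which is exactly the condition under which $|x|^{-(2n-1)}$ is locally $p$-integrable at the origin (and at the endpoint the logarithmic damping you introduce makes things converge, since $\int_0 r^{-1}(\log(1/r))^{-2p}\,dr<\infty$ whenever $p>1/2$, and $n/(2n-1)>1/2$ always). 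More generally, your crude cap bound at small $|x|$ with $\lambda\sim 1$ returns $\mm(f,g)(x)\gtrsim |x|^{-(\alpha+\beta)}(\log)^{-2}$, which is essentially $f(x)g(x)$ and hence in $L^p$ by construction; you cannot beat H\"older from below this way.

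The fix is to look at \emph{large} $|x|$. The paper takes $x=Re_1$ with $R$ large and $t=\sqrt2\,R$; then the support conditions on $f,g$ force the integration to a cap of radius $\sim 1/R$ about $(e_1/\sqrt2,\,e_1/\sqrt2)\in\mathbb S^{2n-1}$. A careful calculation (using the slicing formula of Lemma~\ref{04021} to handle the inner integral over the $z$-sphere) gives $M_{\sqrt2 R}(f,g)(Re_1)\ge C R^{1-2n}$ at the endpoint $p=n/(2n-1)$ and $=\infty$ for $p<n/(2n-1)$. The point is that $|x|^{1-2n}$ fails to lie in $L^p$ at \emph{infinity} precisely when $p(2n-1)\le n$, which is the desired range. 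Your cap heuristic, once relocated to large $|x|$ with the forced cap radius $\lambda\sim 1/R$, already yields the correct power $R^{1-2n}$; the paper's more careful integration is needed mainly to track the logarithm at the endpoint and to see the divergence of the spherical average itself when $p<n/(2n-1)$.
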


\begin{rmk}
We note that $\tf{1+\de_n}{1+2\de_n}-\tf n{2n-1}= \f{1+\f n 5-\f32}{1+\f {2n}5-3} -\f{n}{2n-1}\approx \f1n\to 0$ as $n\to \infty$.
This means that the gap between the range of  boundedness and 
unboundedness tends to $0$ as the dimension increases to infinity.

\end{rmk}

\bpf
We first consider the case   $n=1$ where it is easy to demonstrate the main idea. 

Define functions on $\mathbb R $ by setting
 $f(y) =|y|^{-1/p_1}(\log \tf1{|y|})^{-2/p_1}\chi_{|y|\le 1/2}$  and $g(y)=|y|^{-1/p_2}(\log \tf1{|y|})^{-2/p_2}\chi_{|y|\le 1/2}$.
Then $f\in L^{p_1}(\mathbb R)$, $g\in L^{p_2}(\mathbb R)$ and we will estimate from below 
$M_{\sqrt 2R}(f,g)(R) $ for large $R$, where 
$$
M_t(f,g)(x)=  \int_{\mathbb S^{ 1}}|f(x-ty)g(x-tz) |d\si(y,z)  .
$$

  In view o  the support   properties  of $f$ and $g$  we have
$|y-\tf1{\sqrt 2}|\le\tf1{2\sqrt 2R}$,
and $|z-\tf1{\sqrt 2}|\le\tf1{2\sqrt 2R}$. We   also have 
that $y^2+z^2=1$ since $(y,z)\in\mathbb S^1$.

Therefore we   rewrite $M_{\sqrt 2R}(f,g)(R)$
as 
\begin{align}\begin{split}\label{kkklll}
\int_{\tf{\sqrt 2}2-\tf1{2\sqrt 2R}}^{\tf{\sqrt 2}2+\tf1{2\sqrt 2R}} 
&|R(1-\sqrt 2y)|^{-\tf1{p_1}}(-\log|R(1-\sqrt 2y)|)^{-\tf{2}{p_1}} \\
&|R(1-\sqrt 2z)|^{-\tf1{p_2}}(-\log|R(1-\sqrt 2z)|)^{-\tf{2}{p_2}}\tf{dy}{\sqrt{1-y^2}},
\end{split}
\end{align}
with $z=\sqrt{1-y^2}$.

Notice that $|R(1-\sqrt 2z)|=R|\tf{1-2z^2}{1+\sqrt 2z}|\le R{|1-2y^2|}
\le3R|1-\sqrt 2y|$
since\footnote{Here $a\approx b$ means that $|a-b|$ is very small.} $z\approx y\approx\sqrt 2/2$.
As a result, with the help of \eqref{e04021} [Lemma~\ref{lemmaO}], the expression in \eqref{kkklll} is greater than 
\begin{align*}
\int_{\tf{\sqrt 2}2-\tf1{100R}}^{\tf{\sqrt 2}2+\tf1{100R}} &
R^{-\tf 1p}
|(1-\sqrt 2y)|^{-\f 1p}(-\log|R(1-\sqrt 2y)|)^{-\f 2p }dy \\
= &
 2 R^{-1}\int_0^{\tf1{100} }t^{-1/p}(\log\tf 1t)^{-2/p}dt = \begin{cases}  C_p R^{-1} \q &\textup{if $p\ge 1$}\\  \infty &\textup{if $p< 1$.} \end{cases}
\end{align*}
Thus $\mathcal M(f,g)\notin L^p(\mathbb R)$ for $p<1  $ and also 
$\mathcal M(f,g)(x) \ge C /x$ for $x$ large if $p=1$. It follows that $\mathcal M(f,g)\notin L^1(\mathbb R)$ for $p=1$, 
hence   the statement of the proposition holds.

We now consider the higher-dimensional case $n\ge 2$.
We define $f(y)=|y|^{{ -n/p_1} }(\log \tf1{|y|})^{{ -2/p_1}}\chi_{|y|\le 1/100}$  
and $g(y)=|y|^{{ -n/p_2} }(\log \tf1{|y|})^{{ -2/p_2} }\chi_{|y|\le 1/2} $. 
{ We have that 
$f$ lies in $L^{p_1}(\mathbb R^n)$ and $g$ lies in $L^{p_2}(\mathbb R^n)$.}
The mapping $(y,z)\mapsto (Ay,Az)$ with $A\in SO_n$ is an isometry on $\mathbb S^{2n-1}$, hence 
{  we have $M_t(f,g)(x) = M_t(f,g)(|x|e_1)$, where $e_1=(1,0,\dots,0)\in \bbr^n $.
Thus we may take $x=Re_1\in \bbr^n$ with $R$ large.}

By the change of variables identity~\eqref{e04022} [Lemma \ref{04021}], we have 
\begin{align*}
M_{\sqrt 2R}&(f,g)(Re_1)  \\
=&
\int_{\mathbb S^{2n-1}} f(Re_1-\sqrt 2Ry) g(Re_1-\sqrt 2Rz)d\si(y,z)\\
=&\int_{B_{n}(\tf1{\sqrt 2}e_1,\tf 1{100R})}|\sqrt R y-Re_1|^{{ -\frac{n}{p_1}}}
(-\log|Re_1-\sqrt 2Ry|)^{ { - \f{2}{p_1}}}\\
&\quad\int_E
|\sqrt 2Rz-Re_1|^{{ -\frac{n}{p_2}}}(-\log|Re_1-\sqrt 2Rz|)^{{ -\frac{2}{p_2}}}d\si^r_{n-1}(z)\tf{dy}{\sqrt{1-|y|^2}},
\end{align*}
where $B_n(a,r)$ is a ball in $\rn$ centered at $a$ with radius $r$, 
and
$E$ is the $(n-1)$-dimensional manifold ${\mathbb S^{n-1}_{\sqrt{1-|y|^2}}\cap B_n(\tf1{\sqrt 2}e_1,\tf1{2\sqrt 2R})}$ with 
$\mathbb S^{n-1}_r$ being  the sphere in $\rn$ with radius $r$ and
 $d\si_{n-1}^{{  r}}$ the measure on $\mathbb S^{n-1}_r$.
 
We next focus on the inner integral, namely
$$
I=\int_E
|\sqrt 2Rz-Re_1|^{{ -\frac{n}{p_2}}}(-\log|Re_1-\sqrt 2Rz|)^{{ -\frac{2}{p_2}}}d\si_{n-1}^{{  r}}(z).
$$
Take a point $z_0\in \mathbb S^{n-1}_{\sqrt{1-|y|^2}}\cap {  \partial \big( B_n} (\tf1{\sqrt 2}e_1,\tf1{2\sqrt 2R})\big) $,
and let $\tht$ be the angle between vectors $ z_0$ and $e_1$,
which the largest one between $z\in E$ and $e_1$. {  Here $\partial B$ is the boundary of a set $B$.}
Then {  $\theta$ is small if $R$ is large} and\footnote{$A\sim B$ means that the ratio $A/B$ is bounded
above and below} $|E|\sim (\sqrt{1-|y|^2}\tht)^{n-1}\sim\tht^{n-1}$.
Noticing that $\tht^2\sim\sin^2\tht=1-\cos^2\tht\sim1-\cos\tht$ and   that 
$$1-|y|^2+\tf12-\sqrt2\sqrt{1-|y|^2}\cos\tht=\tf1{8R^2} ,$$  we obtain that
$\tht^2\sim\tf1{8R^2}-(\sqrt{1-|y|^2}-\tf1{\sqrt 2})^2$.
Then we write 
$$
\Big|\sqrt{1-|y|^2}-\tf1{\sqrt 2}\Big|=
\Big|\tf{1-|y|^2-\tf1{ 2}}{\sqrt{1-|y|^2}+\tf1{\sqrt 2}}\Big|
\le 2|\tf12-|y|^2|\le\tf1{25R}.
$$
Consequently $\tht\ge C/R$.

Collecting the previous calculations, we can bound $I$ from below by
$$
\int_0^\tht\int_{\mathbb S^{n-2}_{t\sin\al}}|\sqrt 2Rz-Re_1|^{{ -\frac{n}{p_2}}}(-\log|Re_1-\sqrt 2Rz|)^{{ -\frac{2}{p_2}}}
d\si_{n-2}^{{  t\sin \alpha}} (z)d\al,
$$
where $t=|z|=\sqrt{1-|y|^2}\approx\tf1{\sqrt2}$, and $z_1=\cos\al$.
By symmetry, let us consider just that case $t<\tf1{\sqrt 2}$.
Let $\be$ be the angle such that $|\sqrt 2z-e_1|=2|\sqrt 2t-1|$,
then $2t^2+1-2\sqrt 2t\cos\be=4|\sqrt 2t-1|^2$,
which implies that
$\be^2\sim1-\cos\be
\sim2\sqrt 2t-2t^2-1+4(\sqrt2t-1)^2=3(\sqrt 2t-1)^2.$
So $\be\sim 1-\sqrt 2t$. 
When $\al=0$, we have trivially that $|\sqrt2z-e_1|=|\sqrt 2t-1|$.
So for $\al\in[0,\be]$, we have $|\sqrt2z-e_1|\sim 2|\sqrt 2t-1|
\le 2\big|2|z|^2-1\big|=2\big|2|y|^2-1\big|\le 6\big|\sqrt 2|y|-1\big|\le 6|\sqrt 2y-e_1|$.
Consequently using { the fact that $1-\sqrt 2t\le C\tht$ and} \eqref{e04021} again we obtain
\begin{align*}
I\ge \ &C\int_0^\tht\int_{\mathbb S^{n-2}_{t\sin\al}} 
  \frac{ |\sqrt 2Rz-Re_1|^{1-n} } {|\sqrt 2Rz-Re_1|^{ {  \f{n}{p_2} -n+1 } }(-\log|Re_1-\sqrt 2Rz|)^{{ \f 2 {p_2}}  }}
\,\, d\si_{n-2}^{{  t\sin \alpha}}(z)\, d\al\\
\ge \ &  \f{CR^{1-n}     |\sqrt 2t-1|^{1-n} }{ |\sqrt 2Ry-Re_1|^{  {  \f{n}{p_2} -n+1 }  }(-\log|Re_1-\sqrt 2Ry|)^{ {  \f{2}{p_2}   }}  } \int_0^{C(1-\sqrt2t)}\sin^{n-2}\al d\al\\
\ge\ & CR^{1-n}   \f{|\sqrt 2t-1|^{1-n}|1-\sqrt2t|^{n-1}}{|\sqrt 2Ry-Re_1|^{   {  \f{n}{p_2} -n+1 }  }(-\log|Re_1-\sqrt 2Ry|)^{ {  \f{2}{p_2}   }}    }    
\\
=\ &CR^{1-n}|\sqrt 2Ry-Re_1|^{  {  -\f{n}{p_2} +n-1 }   }(-\log|Re_1-\sqrt 2Ry|)^{-{  \f{2}{p_2} } }.
\end{align*}
Using this estimate 
we see that
\begin{align*}
M_{\sqrt 2R}&(f,g)(Re_1) \\
\ge\ &C R^{1-n}\int_{B_n(\tf1{\sqrt2}e_1,\tf1{100R})}
|Re_1-\sqrt 2Ry|^{   { -\f np +  n-1}   }(-\log |Re_1-\sqrt 2Ry|)^{    { -\frac 2 p   }    }dy\\
=\ &CR^{   {   1-2n} }\int_{B_n(0,\tf1{100})}
|x|^{ { -\f np +  n-1}   }(-\log |x|)^{  { -\frac 2 p   } }dx\\
=\ &CR^{   {   1-2n} }\int_0^{\tf1{100}}r^{{ -\f np + 2n-2} }(-\log r)^{{ -\frac 2 p   } } dr\\
=\ &  {  \begin{cases}  CR^{ -2n+1}    &\q\textup{ if $p=\f n{2n-1}$}  \\ \infty   &\q\textup{ if $p<\f n{2n-1}$}\end{cases}  } .  
\end{align*}
Hence $\mm(f,g) $ is not in $L^p$ for $p<\f n{2n-1}$ and $\mm(f,g)(x)\ge C|x|^{1-2n}$ for all
$|x|$ large enough, hence it is also not in $L^{\f n{2n-1}}(\rn)$ when $p=\f n{2n-1}$.
\epf

Lastly, we prove a couple of points left open. 
 
\begin{lm}\label{lemmaO}
Let $r_1,r_2>0$, $t,\ s\le \tf1{10}$, and $t\le Cs$ for some $C\ge 1$.
Then there exists an absolute constant $C'$ (depending on $C, {  r_1,r_2}$) such that
\begin{equation}\label{e04021}
s^{-r_1}(\log\tf1s)^{-r_2}\le C' t^{-r_1}(\log\tf1t)^{-r_2}.
\end{equation}

\end{lm}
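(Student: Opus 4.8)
The plan is to reduce the two-variable inequality \eqref{e04021} to a one-variable monotonicity statement. Write $h(u)=u^{-r_1}(\log\tf1u)^{-r_2}$ for $u\in(0,\tf1{10}]$; the claim is that $h(s)\le C' h(t)$ whenever $t\le Cs$ and both lie in $(0,\tf1{10}]$. Since $s\ge t/C\ge t/C$, it suffices to show $h$ is ``almost decreasing'' on $(0,\tf1{10}]$, i.e. that there is a constant $A=A(C,r_1,r_2)$ with $h(u)\le A\,h(v)$ whenever $v\le u\le Cv$ and $0<v\le u\le\tf1{10}$; applying this with $u=s$, $v=t/C$ handles the case $s\ge t$ directly, and for $s\le t\le Cs$ we apply it with $u=t$, $v=s$ and invert.

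First I would record that $h$ is genuinely decreasing on $(0,\tf1{10}]$: indeed $\log h(u)=-r_1\log u-r_2\log\log\tf1u$, and differentiating gives $\tf{h'(u)}{h(u)}=-\tf{r_1}u+\tf{r_2}{u\log\tf1u}$, which is negative precisely when $r_1\log\tf1u>r_2$, i.e. when $u<e^{-r_2/r_1}$. If $\tf1{10}\le e^{-r_2/r_1}$ this already shows $h$ is decreasing on the whole interval and one may take $C'=C^{r_1}$ (since then $h(s)\le h(t/C)\le (t/C)^{-r_1}(\log\tf{C}t)^{-r_2}\le C^{r_1}t^{-r_1}(\log\tf1t)^{-r_2}$, using $\log\tf Ct\ge\log\tf1t$ because $C\ge1$). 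In the general case, split the interval at the threshold $u_0=\min(e^{-r_2/r_1},\tf1{10})$: on $(0,u_0]$ the function $h$ is decreasing, and on $[u_0,\tf1{10}]$ it is continuous and strictly positive on a compact set, hence bounded above and below by positive constants depending only on $r_1,r_2$. Combining these two facts yields the almost-decreasing property with a constant depending only on $r_1,r_2$ and $C$.

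The only mild obstacle is bookkeeping the constant's dependence: one must check that $\log\tf1s$ and $\log\tf1t$ are comparable up to additive constants when $t\le Cs$ and both are $\le\tf1{10}$ (so that $\log\tf1t\ge\log\tf1{10}>0$ and $\log\tf1s=\log\tf1t-\log\tf1C\cdot\text{(sign)}$ stays within a bounded ratio of $\log\tf1t$), which is where the hypothesis $t,s\le\tf1{10}$ is used — it keeps the logarithms bounded away from $0$ so that an additive shift by $\log C$ costs only a multiplicative factor. Concretely, $\log\tf1s\le\log\tf Ct=\log\tf1t+\log C\le(1+\tf{\log C}{\log 10})\log\tf1t$, so $(\log\tf1s)^{-r_2}\ge(1+\tf{\log C}{\log 10})^{-r_2}(\log\tf1t)^{-r_2}$ when $r_2\ge0$, and an analogous bound in the reverse direction; together with $s^{-r_1}\le C^{r_1}t^{-r_1}$ (valid since $s\ge t/C$) this gives \eqref{e04021} with $C'=C^{r_1}(1+\tf{\log C}{\log 10})^{r_2}$ in the easy regime, and the compactness argument above absorbs the threshold region. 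I would present the short direct computation rather than the case split if the threshold $e^{-r_2/r_1}$ exceeds $\tf1{10}$ for the relevant $r_1,r_2$ (which are $\tf1{p_i}$ and $\tf2{p_i}$ with $p_i\ge1$, so $r_2/r_1=2$ and $e^{-2}<\tf1{10}$ fails — hence the case split is actually needed), so the compactness step is the one to write carefully.
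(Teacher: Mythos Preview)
Your second paragraph is exactly the paper's proof: setting $F(x)=x^{r_1}(\log x)^{-r_2}$ (your $h(1/x)$), one checks $F$ is increasing for $x$ large and then chains $F(1/s)\le C^{r_1}F(1/(Cs))\le C^{r_1}F(1/t)$, using $t\le Cs$. The paper leaves the threshold issue implicit, while your compactness patch makes the lemma rigorous for arbitrary $r_1,r_2>0$.

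Two small slips worth fixing. First, the ``almost decreasing'' reduction in your opening paragraph is not sufficient as stated: knowing $h(u)\le A\,h(v)$ only for $v\le u\le Cv$ does not yield $h(s)\le C'h(t)$ when $s$ is much larger than $t$ (e.g.\ $s=\tfrac1{10}$, $t\to 0$), which the hypotheses allow. You can simply delete that paragraph, since your second paragraph already contains the full argument. Second, your closing parenthetical reaches the wrong conclusion: $e^{-2}\approx 0.135>\tfrac1{10}$, so for the exponents $r_2/r_1=2$ appearing in the application the threshold $e^{-r_2/r_1}$ \emph{does} exceed $\tfrac1{10}$, $h$ \emph{is} decreasing on all of $(0,\tfrac1{10}]$, and the case split is \emph{not} needed there (only for the general lemma with arbitrary $r_1,r_2$).
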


\bpf
Define $F(x)=x^{r_1}(\log x)^{-r_2}$. Differentiating $F$, we see that
$F$ is increasing when $x$ is large enough and so, 
$$
F(\tf1{s})=s^{-r_1}(\log\tf1s)^{-r_2}\le C^{r_1} (Cs)^{-r_1}(\log\tf1{Cs})^{-r_2}
=C^{r_1}F(\tf1{Cs})\le {  C'} F(\tf1t) ,
$$
which is a restatement of \eqref{e04021}.

\epf

\begin{lm}\label{04021}
For   functions $F(y,z)$ defined in $\bbr^{2n}$ with $y,\ z\in \rn$,
we have
\begin{equation}\label{e04022}
\int_{\mathbb S^{2n-1}}F(y,z)d\si(y,z)
= \int_{B_n}\int_{\mathbb S^{n-1}_{r_y}}F(y,z)d\si_{n-1}^{{  r_y}} (z)\tf{dy}{\sqrt{1-|y|^2}},
\end{equation}
where $B_n$ is the unit ball in $\rn$ and $\mathbb S^{n-1}_{r_y}$ is the sphere in $\rn$
centered at $0$ with radius $r_y=\sqrt{1-|y|^2}$.
\end{lm}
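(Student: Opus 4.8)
The plan is to integrate over $\mathbb S^{2n-1}\subset\rn\times\rn$ by slicing in the first $n$ coordinates. The set $\{(y,z)\in\mathbb S^{2n-1}:|y|=1\}$ is a proper submanifold of $\mathbb S^{2n-1}$, hence $\si$-null, so it may be discarded; over the open unit ball $B_n$ the fiber of the projection $(y,z)\mapsto y$ is precisely the sphere $\mathbb S^{n-1}_{r_y}$ with $r_y=\sqrt{1-|y|^2}$. First I would introduce the parametrization
\[
\Phi\colon B_n\times\mathbb S^{n-1}\to\mathbb S^{2n-1},\qquad \Phi(y,\om)=\big(y,\,\sqrt{1-|y|^2}\,\om\big),
\]
which is a smooth diffeomorphism onto the full $\si$-measure subset $\{(y,z)\in\mathbb S^{2n-1}:|y|<1\}$, and then compute the surface measure it pulls back.

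Next I would carry out the Jacobian computation. Fixing a local chart for $\mathbb S^{n-1}$ near $\om$ with metric $(g_{ab}(\om))$, the tangent frame of $\Phi$ consists of the vectors $\p_{y_i}\Phi=\big(e_i,\,-y_i(1-|y|^2)^{-1/2}\om\big)$ for $i=1,\dots,n$, together with the lifts $\big(0,\,\sqrt{1-|y|^2}\,\tau\big)$ of the chart directions $\tau$ tangent to $\mathbb S^{n-1}$ at $\om$. The key observation is that the associated Gram matrix is block diagonal: the mixed entries vanish because $\langle\om,\tau\rangle=0$, the $y$-block equals $I+\dfrac{yy^{T}}{1-|y|^2}$, whose determinant is $\dfrac{1}{1-|y|^2}$ by the matrix determinant lemma, and the angular block equals $(1-|y|^2)\,(g_{ab}(\om))$. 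Taking the square root of the total determinant and recalling that $\sqrt{\det g}$ times Lebesgue measure in the chart is $d\si_{n-1}(\om)$, one obtains
\[
d\si\big(\Phi(y,\om)\big)=(1-|y|^2)^{\frac{n-2}{2}}\,d\si_{n-1}(\om)\,dy .
\]
Finally, rescaling via $\int_{\mathbb S^{n-1}}F\big(y,\sqrt{1-|y|^2}\,\om\big)\,d\si_{n-1}(\om)=(1-|y|^2)^{\frac{1-n}{2}}\int_{\mathbb S^{n-1}_{r_y}}F(y,z)\,d\si_{n-1}^{r_y}(z)$ and collecting the exponents of $1-|y|^2$, namely $\tfrac{n-2}{2}+\tfrac{1-n}{2}=-\tfrac12$, produces the factor $1/\sqrt{1-|y|^2}$ and hence exactly \eqref{e04022}.

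I expect the only points requiring care to be the verification that $\Phi$ is a diffeomorphism onto a set of full $\si$-measure (routine, using that $\{|y|=1\}$ is null) and the bookkeeping of the block-diagonal Gram determinant; neither is a genuine obstacle. As an alternative I would note that one can avoid the explicit parametrization by applying the coarea formula to the submersion $h(y,z)=y$ on $\mathbb S^{2n-1}$: a short linear-algebra computation shows that on the tangent space $\{(u,v):y\cdot u+z\cdot v=0\}$ the normal Jacobian of $h$ equals $\sqrt{\det(I-yy^{T})}=\sqrt{1-|y|^2}$, and since this quantity is constant along each fiber $\{y\}\times\mathbb S^{n-1}_{r_y}$, the coarea formula yields \eqref{e04022} directly.
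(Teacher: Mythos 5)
Your proof is correct, but it takes a genuinely different route from the paper. The paper never computes a Jacobian of the polar-type parametrization: it instead writes both sides of \eqref{e04022} as iterated integrals over balls using the standard graph/hemisphere formula $\int_{\mathbb S^{m-1}}G\,d\si=\int_{B_{m-1}}[G(w',w_m)+G(w',-w_m)]\,dw'/\sqrt{1-|w'|^2}$ --- once for $\mathbb S^{2n-1}$ on the left and once for the rescaled $\mathbb S^{n-1}$ inside the right-hand side --- and then matches the two via the algebraic identity $\sqrt{1-|\om'|^2}\,\sqrt{1-|y|^2}=\sqrt{1-|y|^2-|z'|^2}$ and a Fubini-type decomposition of $B_{2n-1}$ into the slices $\{(y,r_yv):v\in B_{n-1}\}$. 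Your argument instead exhibits the diffeomorphism $\Phi(y,\om)=(y,r_y\om)$ off the null set $\{|y|=1\}$ and computes its block-diagonal Gram determinant directly, obtaining the density $(1-|y|^2)^{(n-2)/2}$ before rescaling; your exponent bookkeeping $\tfrac{n-2}{2}+\tfrac{1-n}{2}=-\tfrac12$ is right, and the coarea variant with normal Jacobian $\sqrt{1-|y|^2}$ also checks out. What the paper's route buys is that it leans entirely on a citable textbook formula and pure algebra, with no differential-geometric machinery; what yours buys is a self-contained identification of the pushforward density that makes the source of the factor $1/\sqrt{1-|y|^2}$ transparent and generalizes immediately to other splittings $\mathbb R^{m}\times\mathbb R^{k}$. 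Both are complete proofs; just make sure, if you write yours up, to state explicitly that the Gram-determinant convention you use (square root of the determinant of the full $(2n-1)\times(2n-1)$ Gram matrix in the mixed chart) is the one defining surface measure, since that is the only point where an unstated normalization could slip.
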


\bpf
We begin by writing   
$\int_{\mathbb S^{2n-1}}F(y,z)d\si(y,z)$ as 
\begin{equation}\label{e04023}
\int_{B_{2n-1}}{   \big[ F(y,z',z_n) +  F(y,z',-z_n) \big]  } \tf{dydz'}{\sqrt{1-|y|^2-|z'|^2}},
\end{equation}
where $z=(z',z_n)$, and $z_n=\sqrt{1-|y|^2-|z'|^2}$; see \cite[Appendix D.5]{CFA}. 

{  Writing $z/r_y=\om=(\om',\om_{n-1}) \in \mathbb R^{n-1}\times \mathbb R$, }we express  the right hand side of \eqref{e04022} as
\begin{align*}
&\int_{B_n}\int_{\mathbb S_{r_y}^{n-1}}F(y,z)d\si_{n-1}^{{  r_y}} (z)\tf{dy}{\sqrt{1-|y|^2}}\\
=&{   \int_{B_n}r_y^{n-1}\int_{\mathbb S^{n-1}}F(y,r_y\om)d\si_{n-1} (\om)\tf{dy}{\sqrt{1-|y|^2}}    } \\
=&\int_{B_n}r_y^{n-1}\int_{B_{n-1}} {  \big[F(y,r_y \om',r_y\om_{n})  
+ F(y,r_y \om',-r_y\om_{n}) \big]}\tf{d\om'}{\sqrt{1-|\om'|^2}}\tf{dy}{\sqrt{1-|y|^2}}\\
=&\int_{B_n}r_y^{n-1}\int_{r_yB_{n-1}}{  \big[ F(y,z',z_n)+F(y,z',-z_n)\big]}\tf{r_y^{1-n}dz'}{\sqrt{1-|\om'|^2}}\tf{dy}{\sqrt{1-|y|^2}} \\
= & {  \int_{B_{n}}\int_{r_yB_{n-1}} \big[ F(y,z',z_n) +  F(y,z',-z_n) \big]    \tf{dydz'}{\sqrt{1-|y|^2-|z'|^2}}, }
\end{align*}
as one can easily verify that 
$\sqrt{1-|\om'|^2}\sqrt{1-|y|^2}=\sqrt{1-|y|^2-|z'|^2}$. Using that  
$B_{2n-1}$ is equal to the disjoint union of the sets $  \{ (y,r_yv):\,\,   {v\in B_{n-1}} \}$ over all $y\in B_n$,    we see that the last double  integral is equal to the expression in 
\eqref{e04023}, as   claimed.
\epf


\end{document}